\documentclass[a4paper,reqno]{amsart}

\usepackage{amsmath,amsfonts,amsthm,amssymb,hyperref,latexsym,mathrsfs,enumerate,pgfplots,tikz-cd,tkz-graph,mathtools}

\usetikzlibrary{decorations.markings,automata,arrows,positioning,calc}

\makeatletter
\@namedef{subjclassname@2020}{%
  \textup{2020} Mathematics Subject Classification}
\makeatother

\newtheorem {theorem}{Theorem}[section]

\newtheorem {proposition}[theorem]{Proposition}
\newtheorem {corollary}[theorem]{Corollary}

\newtheorem {thm}{Theorem}

\theoremstyle {definition}

\newtheorem {example}[theorem]{Example}
\newtheorem {definition}[theorem]{Definition}
\newtheorem {remark}[theorem]{Remark}
\newtheorem {question}[theorem]{Question}

\numberwithin{equation}{section}

\title{Applications of $\mathrm{C}^*$-classification}

\author[B.~Jacelon]{Bhishan Jacelon}
\address[B.~Jacelon]{
Institute of Mathematics of the Czech Academy of Sciences\\ \v{Z}itn\'{a} 25\\115 67 Prague 1\\Czech Republic}
\email{jacelon@math.cas.cz}

\begin{document}

\subjclass[2020]{46L35, 46L85}
\keywords{Classification of nuclear $\mathrm{C}^*$-algebras, $K$-theory, unitary orbits, dynamical systems, Choquet simplices, Markov chains.}

\maketitle

\begin{abstract} 
We provide some background on the category of classifiable $\mathrm{C}^*$-algebras, whose objects are infinite-dimensional, simple, separable, unital $\mathrm{C}^*$-algebras that have finite nuclear dimension and satisfy the universal coefficient theorem, and describe some applications of the classification of objects in, and morphisms into, this category.
\end{abstract}

\section{Introduction} \label{section:intro}

The theme of classification is pervasive in the sciences and in mathematics, certainly in topology and unsurprisingly in operator algebras (even from the beginning \cite{Murray:1943le}). George Elliott parlayed the work of Glimm \cite{Glimm:1960qh} and Bratteli \cite{Bratteli:1972fj} into a classification of approximately finite-dimensional (AF) algebras by $K$-theory \cite{Elliott:1976kq}. Elliott conjectured that such a classification should hold in the much broader class of simple, separable, unital, nuclear $\mathrm{C}^*$-algebras. The question, in other words, was whether isomorphism of such algebras could be determined by isomorphism of their ordered $K$-groups, which in principle are simpler, computable mathematical objects. The project centred on proving this conjecture (with the invariant suitably fine tuned to include traces) came to be known as the Elliott programme, and it has been an active research area over the past few decades.

With the imposition of a regularity property called \emph{finite nuclear dimension} (an analogue of finite topological dimension) and modulo the universal coefficient theorem (UCT), which is a technical assumption on $K$-theory, the classification programme has proven a resounding success. The far-reaching theorem below is the culmination of the work of many mathematicians; see, for example, \cite{Kirchberg:2000kq,Phillips:2000fj,Elliott:2016ab,Gong:2020ud,Gong:2020uf,Carrion:wz,Winter:2012pi,Castillejos:2021wm,Tikuisis:aa}.

\begin{thm}[The classification theorem] \label{thm:class}
The class of infinite-dimensional, simple, separable, unital $\mathrm{C}^*$-algebras that have finite nuclear dimension and satisfy the UCT is classified by the Elliott invariant.
\end{thm}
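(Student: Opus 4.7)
The plan is to separate the argument along the dichotomy, afforded by finite nuclear dimension, between purely infinite and stably finite algebras: any $A$ in the class is either purely infinite or stably finite, and in both cases it tensorially absorbs the Jiang--Su algebra $\mathcal{Z}$ (the purely infinite case in fact absorbing the Cuntz algebra $\mathcal{O}_\infty$). In the purely infinite case the trace simplex is empty, the Elliott invariant reduces to $(K_0, [1_A], K_1)$, and the classification is the Kirchberg--Phillips theorem. In the stably finite case we must retain the full tracial data together with the pairing $K_0 \to \mathrm{Aff}\,T(A)$. The unifying template in both regimes is the Elliott intertwining argument: prove an existence theorem (every morphism of invariants lifts to a $*$-homomorphism) and a uniqueness theorem (any two $*$-homomorphisms inducing the same map on invariants are approximately unitarily equivalent), and then intertwine.

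For the purely infinite case the UCT provides enough $KK$-theoretic flexibility, via the Rosenberg--Schochet universal coefficient sequence, to realise every admissible invariant map as a $KK$-class, and Kirchberg's absorption theorem together with stability of $\mathcal{O}_\infty$-absorbing algebras yield both existence and uniqueness. For the stably finite case I would first invoke the theorem of Castillejos--Evington--Tikuisis--White--Winter that finite nuclear dimension implies $\mathcal{Z}$-stability, at which point the algebra becomes amenable to tracial approximation techniques: one compares $A$ with model algebras (subhomogeneous, or simple tracially AF/AI, tensored with $\mathcal{Z}$) realising the full range of Elliott invariants. Existence and uniqueness, now incorporating the pairing with traces, follow from the work of Gong--Lin--Niu and Elliott--Gong--Lin--Niu, again glued by the intertwining.

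The main obstacle, and the part that required the longest sustained effort by the community, is the uniqueness theorem in the stably finite setting: one must control both the $KK$-class of a morphism and its induced continuous affine map on $T(A)$ simultaneously, and translate approximate agreement on the Elliott invariant into an approximate unitary equivalence inside $A$. This rests on delicate analysis of central sequence algebras and the interplay between $\mathcal{Z}$-stability and the Cuntz semigroup. My plan is therefore to cite the necessary deep black boxes --- Kirchberg--Phillips in the purely infinite case; Castillejos--Evington--Tikuisis--White--Winter for $\mathcal{Z}$-stability; the stably finite classification of Gong--Lin--Niu and Elliott--Gong--Lin--Niu; and the UCT-based lifting of invariant morphisms --- and assemble them along the dichotomy via the Elliott intertwining framework.
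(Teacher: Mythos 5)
The paper offers no proof of this theorem: it is stated as the culmination of the cited literature, and your outline --- the purely infinite/stably finite dichotomy (which comes via $\mathcal{Z}$-stability and R{\o}rdam's dichotomy, with $\mathcal{Z}$-stability supplied by Castillejos--Evington--Tikuisis--White--Winter), Kirchberg--Phillips on one side, Gong--Lin--Niu/Elliott--Gong--Lin--Niu (or the abstract Carri\'{o}n--Gabe--Schafhauser--Tikuisis--White route) on the other, glued by Elliott intertwining --- is exactly the architecture of those references. The only essential black box you leave implicit is the Tikuisis--White--Winter quasidiagonality theorem, which the paper also cites and which is needed to bring the stably finite case within reach of the tracial approximation machinery.
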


We will provide some background on classification and the Elliott invariant in Sections~\ref{section:category}, \ref{section:ktheory} and \ref{section:traces}, but the primary aim of these notes is to demonstrate the usefulness of classification as an analytical tool. Actually, we will make crucial use not just of the classification of suitably well-behaved $\mathrm{C}^*$-algebras but also of morphisms into these algebras either by the Cuntz semigroup (see Section~\ref{section:traces}) or by a refinement of the Elliott invariant (see Section~\ref{section:chaos}). Armed with this, we will show how to tackle the Weyl problem (Section~\ref{section:weyl}), analyse the generic tracial behaviour of automorphisms (Section~\ref{section:chaos}) and randomly construct classifiable $\mathrm{C}^*$-algebras (Section~\ref{section:random}).

\section{The classifiable category} \label{section:category}

Theorem~\ref{thm:class} is a statement about a certain collection $\mathcal{E}$ of infinite-dimensional, simple, separable, unital $\mathrm{C}^*$-algebras.

\begin{definition} \label{def:category}
The \emph{classifiable category $\mathcal{E}$} is the category whose objects are infinite-dimensional, simple, separable, unital $\mathrm{C}^*$-algebras that have finite nuclear dimension and satisfy the UCT, and whose morphisms are unital $^*$-homomorphisms.
\end{definition}

The objects in $\mathcal{E}$ are sometimes referred to as \emph{classifiable $\mathrm{C}^*$-algebras}. They come in two flavours (see Definition~\ref{def:finite} and Remark~\ref{remark:finite}): \emph{purely infinite} like the Cuntz algebras $\mathcal{O}_n$ or \emph{stably finite} like the CAR algebra $M_{2^\infty}$. An example of a unital $\mathrm{C}^*$-algebra that is \emph{not} classifiable in this sense is $\mathbb{B}(\mathcal{H})$, for any Hilbert space $\mathcal{H}$. Indeed, if $\mathcal{H}$ is infinite dimensional then $\mathbb{B}(\mathcal{H})$ is nonsimple (the algebra $\mathbb{K}(\mathcal{H})$ of compact operators is a nontrivial closed, two-sided $^*$-ideal), nonnuclear (that is, it does not have the completely positive approximation property discussed in the proof of Proposition~\ref{prop:nuclear}) and indeed nonseparable (as a normed space). If on the other hand $\mathcal{H}$ is finite dimensional, then $\mathbb{B}(\mathcal{H})$ does satisfy most of the required hypotheses (including finite nuclear dimension and the universal coefficient theorem, which are discussed in Section~\ref{subsection:dimnuc} and Section~\ref{subsection:uct} respectively) but not infinite dimensionality (again as a normed space). This latter restriction is necessary given that $\mathcal{E}$ contains the Jiang--Su algebra $\mathcal{Z}$, which the Elliott invariant cannot distinguish from the complex numbers $\mathbb{C}$.

\subsection{The Elliott invariant}

\begin{definition}
The \emph{Elliott invariant} of a unital $\mathrm{C}^*$-algebra $A$ is
\[
\mathrm{Ell}(A) = (K_0(A),K_0(A)_+,[1_A],\mathrm{Aff}(T(A)),\rho_A\colon K_0(A)\to\mathrm{Aff}(T(A)),K_1(A)).
\]
\end{definition}

The components of the invariant are $K$-theory $K_*(A)$ (together with its order structure $K_0(A)_+$ and the $K_0$-class of the unit), the tracial state space $T(A)$, and the pairing $\rho_A$ between the two. They will be described in more detail in Section~\ref{section:ktheory} and Section~\ref{section:traces}. Although we will not precisely define the target category (whose morphisms consist of $K$-theoretic and tracial morphisms that are compatible with each other via the pairing map), we note that $\mathrm{Ell}$ is indeed a functor.

The precise statement of Theorem~\ref{thm:class} is that, if $A$ and $B$ are objects in $\mathcal{E}$, then there is an isomorphism of $\mathrm{C}^*$-algebras $\varphi\colon A\to B$ if and only if there is an isomorphism of invariants $\Phi\colon\mathrm{Ell}(A)\to\mathrm{Ell}(B)$ such that $\mathrm{Ell}(\varphi)=\Phi$. In fact, Theorem~\ref{thm:class} as presented in \cite{Carrion:wz} also entails a classification of \emph{morphisms} in $\mathcal{E}$, up to approximate unitary equivalence.

\begin{definition}
Two $^*$-homomorphisms $\varphi,\psi\colon A\to B$ between unital $\mathrm{C}^*$-algebras $A$ and $B$ are said to be \emph{approximately unitarily equivalent}, written $\varphi\sim_{au}\psi$, if for every $\varepsilon>0$ and every finite set $F\subseteq A$ there is a unitary $u\in B$ such that $\|\varphi(a)-u\psi(a)u^*\|<\varepsilon$ for every $a\in F$. 
\end{definition}

If $A$ is separable, this is equivalent to the existence of a sequence $(u_n)_{n\in\mathbb{N}}$ of unitaries in $B$ such that $\varphi(a)=\lim_{n\to\infty}u_n\psi(a)u_n^*$ for every $a\in A$.

Since $\mathrm{Ell}$ is invariant under $\sim_{au}$, agreement on $\mathrm{Ell}$ is a necessary condition for approximate unitary equivalence of $^*$-homomorphisms. But in general this is not sufficient. To classify $\sim_{au}$ equivalence classes of morphisms, we will rely on the Cuntz semigroup (see Section~\ref{section:traces}) or a suitable refinement of $\mathrm{Ell}$ that includes `total $K$-theory' and `Hausdorffised algebraic $K_1$' (see \cite{Carrion:wz} and Section~\ref{section:chaos}).

\subsection{Examples}

By \cite[Corollary 7.5]{Winter:2012pi} and \cite[Theorem A]{Castillejos:2021wm}, $\mathcal{E}$ contains every simple, separable, unital approximately subhomogeneous (ASH) algebra of slow dimension growth (in the sense of \cite[Definition 3.2]{Toms:2011fk}). These are sequential inductive limit $\mathrm{C}^*$-algebras $A=\varinjlim(A_n,\varphi_n)$ where:
\begin{itemize}
\item each $A_n$ is unital and subhomogeneous, that is, a subalgebra of $C(X_n,M_{k_n})$ for some compact metrisable space $X_n$ and $k_n\in\mathbb{N}$ (equivalently by a result of Blackadar \cite[Proposition 3.4.2]{Rordam:2002yu}, there is a finite bound on the dimensions of irreducible representations of $A_n$), and
\item each $\varphi\colon A_n\to A_{n+1}$ is a unital $^*$-homomorphism.
\end{itemize}

By \cite[Corollary 2.1]{Ng:2006kq}, the $A_n$ can be taken to be \emph{recursive} subhomogeneous, that is, described by iterated pullbacks of homogeneous algebras $M_{*}(C(X_{*}))$ (see \cite{Phillips:2001zm}). In this setting, `slow dimension growth' means that as $n$ tends to $\infty$, the ratios of the topological dimensions of these homogeneous components to the dimensions of the associated matrix fibres tends to $0$. In fact, by Theorem~\ref{thm:class} and \cite[Theorem 13.50]{Gong:2020ud}, \emph{every} stably finite object in $\mathcal{E}$ is isomorphic to an inductive limit of this form with \emph{no} dimension growth (the dimension of the base spaces being at most $3$), as such algebras exhaust the attainable range of the Elliott invariant for stably finite objects in $\mathcal{E}$. Dimension \emph{zero} corresponds to simple approximately finite-dimensional (AF) algebras like the CAR algebra $M_{2^\infty}$ and its uniformly hyperfinite (UHF) siblings. On the other hand, the `exotic' examples constructed in \cite{Villadsen:1998ys}, which led to the counterexamples \cite{Rordam:2003rz,Toms:2008vn} to the original Elliott conjecture, exhibit very fast dimension growth and indeed are not in $\mathcal{E}$.

Other examples of objects in $\mathcal{E}$, which do not \emph{a priori} have an inductive limit structure, are crossed product $\mathrm{C}^*$-algebras $C(X)\rtimes_\alpha\mathbb{Z}$ whenever $\alpha\colon C(X)\to C(X)$ is induced by a minimal homeomorphism of an infinite compact metrisable space $X$ whose Lebesgue covering dimension (described below) is finite. (Actually, classifiable crossed products are known to arise not just from single homeomorphisms but from a variety of group actions on compact metrisable spaces; see \cite{Kerr:2020ab,Kerr:2020aa,Gardella:2023aa,Gardella:2024aa}.) Minimality is needed for simplicity (see, for example, \cite[Theorem 9.5.8]{Strung:2021aa}) but finite nuclear dimension holds without this assumption (see \cite{Hirshberg:2017aa}). On the other hand, finite-dimensionality of $X$ cannot be dropped (see \cite{Giol:2010vs}).

Finally, $\mathcal{E}$ contains the $\mathrm{C}^*$-algebra $\mathrm{C}^*(E)$ associated to a finite graph $E$ without sinks whenever $\mathrm{C}^*(E)$ is simple. This includes, for example, the Cuntz algebras $\mathcal{O}_n$ (for $n<\infty$). In this case, $\mathrm{C}^*(E)$ is either AF or purely infinite (see \cite[Corollary 3.10]{Kumjian:1998nr} and \cite[Remark 5.6]{Bates:2000fk}). It should be noted that the classification of unital graph algebras by a $K$-theoretic invariant holds in full generality, i.e.\ without the assumption of simplicity (see \cite{Eilers:2021aa}).

\subsection{Finite nuclear dimension and $\mathcal{Z}$-stability} \label{subsection:dimnuc}

To motivate the definition of the nuclear dimension of a $\mathrm{C}^*$-algebra, let us recall the following. Note that, as pointed out in \cite[Remark 2.11]{Winter:2010dn}, we do not lose any generality by restricting our discussion to unital $\mathrm{C}^*$-algebras.

\begin{proposition} \label{prop:nuclear}
Every commutative $\mathrm{C}^*$-algebra is nuclear.
\end{proposition}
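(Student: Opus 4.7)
The plan is to unwind nuclearity via the completely positive approximation property (CPAP) and build the approximating maps from a partition of unity. Recall that $A$ is nuclear iff for every finite $F\subseteq A$ and every $\varepsilon>0$ there exist a finite-dimensional C*-algebra $M$ and completely positive contractions $\psi\colon A\to M$, $\phi\colon M\to A$ with $\|\phi\psi(a)-a\|<\varepsilon$ for all $a\in F$. By the remark preceding the proposition, I can assume $A$ is unital, so Gelfand duality identifies $A=C(X)$ for some compact Hausdorff space $X$. The aim is to realise $M$ as $\mathbb{C}^n$ and to produce $\psi$, $\phi$ from geometry on $X$.

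Given $F=\{f_1,\dots,f_k\}\subseteq C(X)$ and $\varepsilon>0$, I would first use compactness and uniform continuity to produce a finite open cover $\{U_1,\dots,U_n\}$ of $X$ on which each $f_i$ has oscillation less than $\varepsilon$. Then choose $x_j\in U_j$ for each $j$ and a partition of unity $\{h_1,\dots,h_n\}\subseteq C(X)$ subordinate to this cover, so that $h_j\ge 0$, $\mathrm{supp}(h_j)\subseteq U_j$, and $\sum_j h_j=1$. The existence of such a partition of unity is a standard consequence of normality of $X$.

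Next, define $\psi\colon C(X)\to\mathbb{C}^n$ by $\psi(f)=(f(x_1),\dots,f(x_n))$ and $\phi\colon\mathbb{C}^n\to C(X)$ by $\phi(\lambda_1,\dots,\lambda_n)=\sum_j\lambda_j h_j$. The map $\psi$ is a unital $^*$-homomorphism, hence completely positive and contractive. The map $\phi$ is positive (it sends entrywise-nonnegative tuples to nonnegative functions), and hence completely positive since its domain is commutative; it is also unital because $\sum_j h_j=1$, and therefore contractive. For the approximation estimate, I would exploit $\sum_j h_j(x)=1$ to write, for $f\in F$ and $x\in X$,
\[
\phi\psi(f)(x)-f(x)=\sum_{j}h_j(x)\bigl(f(x_j)-f(x)\bigr),
\]
and observe that only those $j$ with $x\in\mathrm{supp}(h_j)\subseteq U_j$ contribute, for which $|f(x_j)-f(x)|<\varepsilon$. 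Taking absolute values and using positivity of the $h_j(x)$ yields $\|\phi\psi(f)-f\|_\infty<\varepsilon$.

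I do not foresee a substantive obstacle: the genuine content is supplied by Gelfand duality together with the existence of a subordinate partition of unity, both of which are standard. The only small subtlety to flag is that the $\phi$ constructed above is automatically completely positive because its domain is commutative (any positive linear map out of an abelian C*-algebra being CP); this is what allows us to bypass matrix amplifications entirely.
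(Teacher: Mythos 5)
Your proof is correct and takes essentially the same approach as the paper: the same finite open cover controlling the oscillation of the functions in $F$, the same point evaluations into $\mathbb{C}^n$, the same partition-of-unity map back, and the same telescoping estimate. The only (welcome) addition is your explicit justification that $\phi$ is completely positive because its domain is commutative, a point the paper's proof asserts without comment.
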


\begin{proof}
We will show that $A=C(X)$ has the completely positive approximation property. Let $F\subseteq C(X)$ be finite and let $\varepsilon>0$. Choose an open cover $U=(U_1,\dots,U_n)$ of $X$ such that, for every $i\in\{1,\dots,n\}$, $x,y\in U_i$ and $f\in F$, we have $|f(x)-f(y)|<\varepsilon$. Fix a point $x_i\in U_i$ for each $i\in\{1,\dots,n\}$ and let $(\rho_1,\dots,\rho_n)$ be a partition of unity subordinate to $U$. Let $B$ be the finite-dimensional $\mathrm{C}^*$-algebra $\mathbb{C}^n$, and define maps $\psi\colon A\to B$ and $\varphi\colon B\to A$ by
\[
\psi(f) = (f(x_1),\dots,f(x_n)), \quad \varphi(\lambda_1,\dots,\lambda_n) = \sum_{i=1}^n \lambda_i\rho_i.
\]
Then, $\varphi$ and $\psi$ are completely positive (in fact, also contractive and indeed $\psi$ is even a $^*$-homomorphism) and for every $f\in F$ we have
\begin{align*}
\|\varphi\circ\psi(f) - f\| &= \sup_{x\in X}\left|\sum_{i=1}^nf(x_i)\rho_i(x) - \sum_{i=1}^n\rho_i(x)f(x)\right|\\
&\le \sup_{x\in X} \sum_{i=1}^n|f(x_i)-f(x)|\rho_i(x)\\
&< \varepsilon. \qedhere
\end{align*}
\end{proof}

If $\dim X = N < \infty$, then we can say more. Here, $\dim X$ is the \emph{Lebesgue covering dimension} of $X$: it is the least integer $N$ such that every finite open cover $\mathcal{W}$ of $X$ can be refined to a finite open cover $\mathcal{V}$ such that no $N+2$ elements of $\mathcal{V}$ can intersect nontrivially. Equivalently (see \cite[Proposition 1.5]{Kirchberg:2004qy}), every $\mathcal{W}$ admits a finite \emph{$N$-decomposable} refinement, that is, a finite open cover $U=(U_i)_{i\in I}$ such that the index set $I$ can be partitioned into $N+1$ colours $I_0\sqcup\dots\sqcup I_N$ such that $U_{i_1}\cap U_{i_2}=\emptyset$ for every distinct pair of indices $i_1,i_2$ in any $I_k$. In other words, sets of the same colour are pairwise disjoint (though differently coloured sets may intersect).

Now assume that the open cover $U$ that appears in the proof of Proposition~\ref{prop:nuclear} is $N$-decomposable. For $k\in\{0,\dots,N\}$, let $B_k=\mathbb{C}^{|I_k|}$ and $\varphi_k=\varphi|_{B_k}$. Because elements $\rho_i$ of the same colour are orthogonal, we then have the following data associated to a finite set $F\subseteq A$ and $\varepsilon>0$:

\begin{itemize}
\item a completely positive contractive (cpc) map $\psi$ from $A$ to a finite-dimensional $\mathrm{C}^*$-algebra $B$;
\item a decomposition $B=B_0\oplus\dots\oplus B_N$;
\item a completely positive map $\varphi\colon B\to A$ such that each $\varphi_k=\varphi|_{B_k}$ is cpc \emph{order zero} (that is, preserves orthogonality of positive elements) and such that $\|\varphi\circ\psi(f)-f\|<\varepsilon$ for every $f\in F$.
\end{itemize}

By definition \cite[Definition 2.1]{Winter:2010dn}, this means that the \emph{nuclear dimension} $\mathrm{dim}_{\mathrm{nuc}}$ of $A=C(X)$ is $\le N$. (If there is no $N$ that works for an arbitrary $F$ and $\varepsilon$, in particular if $A$ is not nuclear, then $\mathrm{dim}_{\mathrm{nuc}}(A)$ is defined to be $\infty$.)

The argument above demonstrates that $\mathrm{dim}_{\mathrm{nuc}}(C(X)) \le \dim X$ for every compact Hausdorff space $X$. Even more is true: not only is every $\varphi_k$ contractive, but $\varphi=\varphi_0\oplus\dots\oplus\varphi_N$ is as well. By definition \cite[Definition 3.1]{Kirchberg:2004qy}, this means that the \emph{decomposition rank} $\mathrm{dr}$ of $A=C(X)$ is $\le N$. Clearly, $\mathrm{dim}_{\mathrm{nuc}}(A)\le \mathrm{dr}(A)$ for any $\mathrm{C}^*$-algebra $A$. In fact, equality holds for commutative $\mathrm{C}^*$-algebras (see \cite[Proposition 2.4]{Winter:2010dn}):

\[
\mathrm{dim}_{\mathrm{nuc}}(C(X)) = \mathrm{dr}(C(X)) = \dim X.
\]

The \emph{Jiang--Su algebra $\mathcal{Z}$}, constructed in \cite{Jiang:1999hb}, is the unique object in $\mathcal{E}$ with $\mathrm{Ell}(\mathcal{Z})\cong\mathrm{Ell}(\mathbb{C})$. A $\mathrm{C}^*$-algebra $A$ is \emph{$\mathcal{Z}$-stable} if $A\otimes\mathcal{Z}\cong A$. By \cite[Theorem A]{Castillejos:2021wm} (see also \cite{Castillejos:2020wv}), finite nuclear dimension is equivalent to $\mathcal{Z}$-stability for infinite-dimensional, simple, separable, nuclear $\mathrm{C}^*$-algebras. So, in Definition~\ref{def:category}, finite nuclear dimension can be replaced by nuclearity and $\mathcal{Z}$-stability.

Strikingly (see \cite[Corollary C]{Castillejos:2021wm}), the only possible finite values of the nuclear dimension or decomposition rank of a simple, unital $\mathrm{C}^*$-algebra $A$ are $0$ (if and only if $A$ is AF) or $1$. And while the difference in definition between $\mathrm{dr}$ and $\mathrm{dim}_{\mathrm{nuc}}$ might seem small, the difference in effect is dramatic. By \cite[Theorem 5.3]{Kirchberg:2004qy}, every separable $\mathrm{C}^*$-algebra of finite decomposition rank must be quasidiagonal and therefore stably finite. By contrast, as pointed out in \cite[Corollary 2.13]{Jiang:1999hb}, every (unital) purely infinite, simple, separable, nuclear $\mathrm{C}^*$-algebra $A$ is $\mathcal{Z}$-stable, and therefore has finite nuclear dimension.

\section{$K$-theory and the UCT} \label{section:ktheory}

\subsection{$K$-theory} We briefly and somewhat informally recall some of the basics of operator $K$-theory. For details, the reader should consult, for example, \cite{Rordam:2000fk} or \cite{Blackadar:1998qf}.

\begin{definition}
Projections $p$ and $q$ in a $\mathrm{C}^*$-algebra $A$ are \emph{Murray--von Neumann equivalent}, written $p\sim q$, if there exists $v\in A$ such that $v^*v=p$ and $vv^*=q$. (Such a $v$ is a \emph{partial isometry}: $vv^*v=v$.) The \emph{Murray--von Neumann semigroup} $V(A)$ of $A$ is the monoid whose elements are equivalence classes of projections in $M_\infty(A)=\bigcup_{n\in\mathbb{N}}M_n(A)$, with addition $[p]+[q] = \left[\begin{pmatrix}p&0\\0&q\end{pmatrix}\right]$ and zero element $[0]$.
\end{definition}

One can replace Murray--von Neumann equivalence by unitary equivalence or homotopy of projections since these notions yield the same equivalence classes in $M_\infty(A)$.

\begin{example} \label{ex:mvn}
\begin{enumerate}[1.]
\item For any $n\in\mathbb{N}$, $V(M_n) \cong V(\mathbb{C}) \cong V(\mathbb{K}(\mathcal{H})) \cong \mathbb{N}_0=\mathbb{N}\cup\{0\}$ via $[p]\mapsto\mathrm{rank}(p)$.
\item For any compact Hausdorff space $X$, $V(C(X))$ is isomorphic to the semigroup of isomorphism classes of complex vector bundles over $X$. In particular, the classes of the trivial bundle and the Bott bundle over $S^2$ generate $V(C(S^2)) \cong \mathbb{N}_0^2$.
\item $V(C_0(X))=0$ for any connected, locally compact space $X$.
\item Like $C_0(X)$, the $\mathrm{C}^*$-algebra
\[
A=\{f \in C([0,1],M_2) \mid f(0)=\mathrm{diag}(a,a),\: f(1)=\mathrm{diag}(a,0),\: a\in\mathbb{C}\}
\]
is \emph{stably projectionless}: there are no nonzero projections in $M_n(A)$ for any $n\in\mathbb{N}$. For such algebras, we always have $V(A)=0$.
\end{enumerate}
\end{example}

Properties of $V(\cdot)$ include
\begin{itemize}
\item functoriality: every $^*$-homomorphism $\varphi\colon A\to B$ induces a monoid homomorphism $\varphi_*\colon V(A)\to V(B)$;
\item homotopy invariance: if $\varphi,\psi\colon A\to B$ are homotopic $^*$-homomorphisms (that is, there is a $^*$-homomorphism $\Phi\colon A\to C([0,1])\otimes B$ such that $\mathrm{ev}_0\circ\Phi=\varphi$ and $\mathrm{ev}_1\circ\Phi=\psi$), then $\varphi_*=\psi_*$ (so in particular, homotopy equivalent $\mathrm{C}^*$-algebras have isomorphic semigroups);
\item additivity: $V(A\oplus B) \cong V(A)\oplus V(B)$;
\item continuity: $V(\varinjlim(A_i,\varphi_{ij})) \cong \varinjlim (V(A_i),(\varphi_{ij})_*)$.
\end{itemize}

\begin{definition}
If $A$ is unital, then $K_0(A)$ is the Grothendieck group associated to $V(A)$, that is, the abelian group generated by formal differences $[p]-[q]$ of classes in $V(A)$, with the identification
\[
[p_1]-[q_1] = [p_2]-[q_2] \iff [p_1]+[q_2]+[r] = [p_2]+[q_1]+[r] \quad\text{ for some } [r]\in V(A).
\]
If $A$ is nonunital, then $K_0(A)$ is defined relative to the minimal unitisation $\widetilde{A}$ of $A$: it is the kernel of the group homomorphism $\pi_*\colon K_0(\widetilde{A})\to K_0(\mathbb{C})$ induced by the quotient map $\pi\colon \widetilde{A}\to\mathbb{C}$, that is, $\pi_*([p]-[q]) = [\pi(p)]-[\pi(q)]$. In either case, unital or not, we write $K_0(A)_+$ for the image of $V(A)$ in $K_0(A)$.
\end{definition}

The `$[r]$' in the definition is unnecessary precisely when $V(A)$ has cancellation (in which case the map $V(A)\to K_0(A)_+$ is injective). A necessary condition for this is \emph{stable finiteness}.

\begin{definition} \label{def:finite}
A $\mathrm{C}^*$-algebra is:
\begin{itemize}
\item \emph{finite} if for all projections $p,q\in A$, if $p\le q$ and $p\sim q$, then $p=q$;
\item \emph{stably finite} if $M_n(A)$ is finite for every $n\in\mathbb{N}$;
\item \emph{infinite} if it is not finite;
\item (assuming that $A$ is simple) \emph{purely infinite} if $A\ne\mathbb{C}$ and for every pair of nonzero positive elements $a,b\in A$, there exists $x\in A$ such that $b=xax^*$.
\end{itemize}
\end{definition}

\begin{remark} \label{remark:finite}
Here, we collect some facts about (in)finiteness that are particularly relevant to classification.
\begin{enumerate}[1.]
\item Infiniteness of a unital $\mathrm{C}^*$-algebra $A$ is equivalent to the existence of a nonunitary isometry in $A$. This is the case for $\mathbb{B}(\mathcal{H})$, which is in fact \emph{properly infinite}: there exist two isometries with orthogonal ranges. Stable finiteness is in general stronger than finiteness: see \cite{Rordam:1998aa} and also \cite[Corollary 7.2]{Rordam:2003rz}, which describes a \emph{simple}, nuclear, finite $\mathrm{C}^*$-algebra $A$ such that $M_n(A)$ is properly infinite for all sufficiently large $n$.
\item If $A$ is stably finite, then $(K_0(A),K_0(A)_+)$ is an ordered abelian group, meaning that $K_0(A)_+\cap(-K_0(A)_+) = \{0\}$ and $K_0(A)=K_0(A)_+-K_0(A)_+$. On the other hand, if $A$ is unital and properly infinite, then $K_0(A)_+=K_0(A)$.
\item Recall that $A$ is \emph{exact} if
\[
\begin{tikzcd}[column sep=small]0 \arrow[r] & A\otimes J \arrow[r] & A\otimes B \arrow[r] & A\otimes B/J \arrow[r] & 0\end{tikzcd}
\]
is a short exact sequence of $\mathrm{C}^*$-algebras whenever
\[
\begin{tikzcd}[column sep=small]0 \arrow[r] & J \arrow[r] & A \arrow[r] & A/J \arrow[r] & 0\end{tikzcd}
\]
is, where $\otimes$ denotes the minimal tensor product. Every nuclear $\mathrm{C}^*$-algebra is exact, $\mathrm{C}^*_r(\mathbb{F}_2)$ is exact but not nuclear, and $\mathrm{C}^*(\mathbb{F}_2)$ (hence $\mathbb{B}(\mathcal{H})$ for infinite-dimensional $\mathcal{H}$) is not exact (see \cite{Wassermann:1994rz}). If $A$ is simple, unital and exact, then $A$ is stably finite if and only if $T(A)\ne\emptyset$ (see \cite[\S1.1.3]{Rordam:2002yu}, and Section~\ref{section:traces} below for a discussion of the trace space $T(A)$). 
\item A simple, exact, $\mathcal{Z}$-stable $\mathrm{C}^*$-algebra is either stably finite or purely infinite (see \cite[Corollary 5.1]{Rordam:2004kq}). From the previous two remarks, we see that the Elliott invariant for the purely infinite portion of the classifiable category reduces to $(K_0(\cdot),[1],K_1(\cdot))$.
\end{enumerate}
\end{remark}

A sufficient condition for cancellation in $V(A)$ is \emph{stable rank one}, meaning that the invertible elements in $\widetilde{A}$ are dense in $\widetilde{A}$. This holds, for example, for $A=C(X)$ whenever $\dim X\le 1$, for AF algebras, for simple, stably finite, $\mathcal{Z}$-stable $\mathrm{C}^*$-algebras (see \cite{Rordam:2002yu,Fu:2022aa}), and for $\mathrm{C}^*_r(\mathbb{F}_n)$ for any $2\le n\le \infty$ (see \cite{Dykema:1997aa}).

$K_0(\cdot)$ inherits the properties of $V(\cdot)$ (functoriality, homotopy invariance, additivity and continuity), and we can use these properties to compute the $K_0$-groups of many examples.

\begin{example}
\begin{enumerate}[1.]
\item For any $n\in\mathbb{N}$, $K_0(M_n) \cong K_0(\mathbb{C}) \cong K_0(\mathbb{K}(\mathcal{H})) \cong \mathbb{Z}$. In general, $K_0(A) \cong K_0(M_n(A)) \cong K_0(A\otimes\mathbb{K}(\mathcal{H}))$ for any $A$.
\item On the other hand, if $\mathcal{H}$ is infinite dimensional, then $K_0(\mathbb{B}(\mathcal{H}))=0$: for any projection $p$ over $\mathbb{B}(\mathcal{H})$, we have $\begin{pmatrix}p&0\\0&1\end{pmatrix} \sim \begin{pmatrix}0&0\\0&1\end{pmatrix}$ and so $[p]+[1] = [0]+[1]$ in $V(\mathbb{B}(\mathcal{H}))$, which implies that $[p]=[0]$.
\item The $K_0$-group of the CAR algebra $M_{2^\infty}=\bigotimes_{n\in\mathbb{N}}M_2$ can be computed using continuity: $K_0(M_{2^\infty}) \cong \varinjlim(\begin{tikzcd}\mathbb{Z} \arrow[r,"\times2"] & \mathbb{Z} \arrow[r,"\times2"] & \mathbb{Z} \arrow[r,"\times2"] & \dots\end{tikzcd}) \cong \mathbb{Z}\left[\frac{1}{2}\right]$. A similar computation can be done for any UHF algebra. In particular, the $K_0$-group of the universal UHF algebra $\mathcal{Q}$ is $\mathbb{Q}$.
\item The cone $C_0((0,1],A)$ over any $\mathrm{C}^*$-algebra $A$ is null homotopic, so has zero $K$-theory.
\item For any compact Hausdorff space $X$, $K_0(C(X)) \cong K^0(X)$, the topological $K$-theory of $X$. If $X$ is contractible, then $K_0(C(X)) \cong K_0(\mathbb{C}) \cong \mathbb{Z}$.
\end{enumerate}
\end{example}

\begin{definition}[Higher $K$-groups]
For $n\ge1$, $K_n(A)$ is defined to be the $K_0$-group of the $n$th suspension of $A$, that is, $K_n(A)=K_0(A\otimes C_0(\mathbb{R}^n))$.
\end{definition}

It can be shown (see \cite[Theorem 8.2.2]{Blackadar:1998qf}) that $K_1(A)\cong K_1(\widetilde{A})$ is isomorphic to the abelian group generated by homotopy classes in $U_\infty(\widetilde{A}) = \bigcup_{n\in\mathbb{N}}U_n(\widetilde{A})$, where $U_n(\widetilde{A})\hookrightarrow U_{n+1}(\widetilde{A})$ via $u\mapsto\mathrm{diag}(u,1)$. The identity element is $[1]$, the class of any unitary homotopic to the identity in some $M_n(\widetilde{A})$, and addition is (or can be shown to be) given by $[u]+[v]=[uv]$. In particular, $-[u]=[u^*]$, and any element in $K_1(A)$ is represented by a single unitary (rather than a formal difference).

\begin{example}
\begin{enumerate}[1.]
\item By the Borel functional calculus, any unitary in $\mathbb{B}(\mathcal{H})$ (for any $\mathcal{H}$) is homotopic to the identity. So, $K_1(\mathbb{B}(\mathcal{H}))=0$.
\item By additivity, continuity and homotopy invariance, $K_1(A)=0$ for any AF or approximately interval (AI) algebra $A$.
\item If $A$ is either purely infinite or stable rank one, then $K_1(A) \cong U(A)/U(A)_0$, the group of homotopy classes of unitaries in $A$ (rather than in matrix algebras over $A$). In particular, $K_1(C(S^1))\cong\mathbb{Z}$ via the winding number. (Note that $K_0(C(S^1))$ is also $\mathbb{Z}$, since every complex vector bundle over the circle is trivial.)
\end{enumerate}
\end{example}

\sloppy
Any ideal $J$ in a $\mathrm{C}^*$-algebra $A$, corresponding to a short exact sequence $\begin{tikzcd}[column sep=small]0 \arrow[r] & J \arrow[r,"\iota"] & A \arrow[r,"\pi"] & A/J \arrow[r] & 0,\end{tikzcd}$ produces a long exact sequence
\[
\begin{tikzcd}[column sep=small]
\dots \ar[r,"\partial"] & K_1(J) \arrow[r,"\iota_*"] & K_1(A) \arrow[r,"\pi_*"] & K_1(A/J) \arrow[r,"\partial"] & K_0(J) \arrow[r,"\iota_*"] & K_0(A) \arrow[r,"\pi_*"] & K_0(A/J)
\end{tikzcd}
\]
\fussy
(see \cite[Theorem 8.3.5]{Blackadar:1998qf}). Actually, by Bott periodicity (see \cite[Theorem 9.2.1]{Blackadar:1998qf}), there is a natural isomorphism $\beta_A\colon K_0(A)\to K_2(A)$ and the long exact sequence then becomes the \emph{six term exact sequence of $K$-theory}:
\begin{equation} \label{eqn:6term}
\begin{tikzcd}
K_0(J) \arrow[r] & K_0(A) \arrow[r] & K_0(A/J) \arrow[d,"\exp"]\\
K_1(A/J) \arrow[u,"\partial"] & K_1(A) \arrow[l] & K_1(J) \arrow[l]
\end{tikzcd}
\end{equation}
The exponential map $\exp\colon K_0(A/J) \to K_1(J)$ (which represents the obstruction to lifting projections from $A/J$ to $A$) can be computed as
\begin{equation} \label{eqn:exp}
\exp([p_1]-[p_2]) = [e^{2\pi ix_1}] - [e^{2\pi ix_2}],
\end{equation}
where $x_j\in M_n(A)$ is a self-adjoint lift of $p_j\in M_n(A/J)$. The abstract Fredholm index $\partial\colon K_1(A/J)\to K_0(J)$ (which represents the obstruction to lifting unitaries) can be computed as
\begin{equation} \label{eqn:index}
\partial([u]) = [1-v^*v] - [1-vv^*],
\end{equation}
where $u\in M_n(\widetilde{A/J})$ is a unitary and $v\in M_k(\widetilde{A})$ ($k\ge n$) is a partial isometry lift of $u$ if such a lift exists, or otherwise a lift of $\begin{pmatrix}u&0\\0&0\end{pmatrix}$ (see \cite[Proposition 9.2.4]{Rordam:2000fk}). Equipped with these formulae, the six term sequence is a very useful tool for computing $K$-theory.

\begin{example} \label{ex:free}
\sloppy
\begin{enumerate}[1.]
\item \label{ex1} The Toeplitz algebra $\mathcal{T}$ is the universal unital $\mathrm{C}^*$-algebra generated by an isometry (see \cite[Example 9.4.4]{Rordam:2000fk}). Concretely, $\mathcal{T}\cong\mathrm{C}^*(S)$, where $S\in\mathbb{B}(\ell^2\mathbb{N})$ is the unilateral shift. It fits into the short exact sequence
\[
\begin{tikzcd}[column sep=small]0 \arrow[r] & \mathbb{K}(\mathcal{H}) \arrow[r] & \mathcal{T} \arrow[r,"\sigma"] & C(S^1) \arrow[r] &0 \end{tikzcd},
\]
where $\sigma$ is the symbol map that sends $S$ to the identity map $z$. It is a straightforward exercise to use \eqref{eqn:6term} and \eqref{eqn:index} to compute $K_0(\mathcal{T})\cong\mathbb{Z}$ and $K_1(\mathcal{T})=0$.
\item \label{ex2} The Jiang--Su algebra $\mathcal{Z}$ is constructed in \cite{Jiang:1999hb} as a limit of \emph{prime dimension drop algebras}
\[
Z_{p,q} = \{f\in C([0,1],M_p\otimes M_q) \mid f(0) \in M_p\otimes 1_q,\: f(1) \in 1_p\otimes M_q\}
\]
where $p,q\ge1$ are coprime integers. In fact, $\mathcal{Z}$ is the unique inductive limit of building blocks of this form that is simple and monotracial. Applying \eqref{eqn:6term} and \eqref{eqn:exp} to the ideal $J=C_0((0,1),M_p\otimes M_q)$ of $Z_{p,q}$, one can show that $K_0(\mathcal{Z})\cong K_0(Z_{p,q}) \cong \mathbb{Z}$ (generated by the class of the unit) and $K_1(\mathcal{Z}) \cong K_1(Z_{p,q}) = 0$.
\item \label{ex3} The $\mathrm{C}^*$-version of the free group factor problem is easily solved using $K$-theory. It can be shown that, for any finite $n$, $K_0(\mathrm{C}^*_r(\mathbb{F}_n)) \cong K_0(\mathrm{C}^*(\mathbb{F}_n)) \cong \mathbb{Z}$ and $K_1(\mathrm{C}^*_r(\mathbb{F}_n)) \cong K_1(\mathrm{C}^*(\mathbb{F}_n)) \cong \mathbb{Z}^n$ (see \cite{Wassermann:1994rz} for an elementary proof). In particular, these $\mathrm{C}^*$-algebras are all pairwise nonisomorphic.
\end{enumerate}
\fussy
\end{example}

Example~\ref{ex:free}.\ref{ex3} demonstrates the use of $K$-theory in distinguishing isomorphism classes of $\mathrm{C}^*$-algebras. Conversely, its positive role in classification was first made evident in \cite{Elliott:1976kq}. Elliott showed that, if $A$ and $B$ are unital AF-algebras, then $A\cong B$ if and only if $(K_0(A),K_0(A)_+,[1_A])\cong (K_0(B),K_0(B)_+,[1_B])$. In \cite{Farah:2014vt}, it is shown that $\mathcal{E}$ cannot be classified by countable structures. In particular, ordered $K$-theory is not enough. The missing ingredient is the tracial functor $T(\cdot)$, which is the major source of the complexity of the Elliott invariant and will be discussed in the next section. For a unital AF algebra, or more generally, a unital exact $\mathrm{C}^*$-algebra $A$ of real rank zero (which means that every self-adjoint element of $A$ can be approximated by \emph{invertible} self-adjoint elements), $T(A)$ is recoverable as the space of states on $K_0(A)$ (see \cite[Theorem 1.1.11]{Rordam:2002yu}). This explains the absence of tracial data in Elliott's theorem. On the other hand, if one also takes $K_1$ and $T(\cdot)$ into account, then Elliott's \emph{intertwining argument} is very broadly applicable (see, for example, \cite{Elliott:1993ai,Elliott:1993kq,Jiang:1999hb}).

\subsection{The UCT} \label{subsection:uct} Finally, we turn to the universal coefficient theorem (UCT), following the point of view of \cite[Definition 2.4.5]{Rordam:2002yu} (which is based on an observation of Skandalis \cite{Skandalis:1988aa}).

\begin{definition}
A separable $\mathrm{C}^*$-algebra $A$ is in the \emph{UCT class} $\mathcal{N}$ if it is $KK$-equivalent to a commutative $\mathrm{C}^*$-algebra.
\end{definition}

Here, $KK_*(\cdot,\cdot)$ is Kasparov's bivariant $K$-theory, which includes both $K$-theory $KK_*(\mathbb{C},\cdot)$ and $K$-homology $KK_*(\cdot,\mathbb{C})$ as special cases. A meaningful discussion of $KK$-theory is well beyond the scope of this article (see \cite{Blackadar:1998qf,Higson:1990qd,Higson:2000to} instead), but we note the useful point of view that elements of $KK(A,B)$ can be thought of as generalised morphisms from $A$ to $B$. In particular, $^*$-homomorphisms define $KK$-classes. A key feature of $KK$-theory is the (bilinear, associative) \emph{Kasparov product} $KK(A,B) \times KK(B,C) \to KK(A,C)$ that extends composition of $^*$-homomorphisms and provides $KK(A,A)$ with the structure of a ring with unit $\iota_A=KK(\mathrm{id}_A)$. $KK$-equivalence between $A$ and $B$ means the existence of $x\in KK(A,B)$ and $y\in KK(B,A)$ with $x\times y =\iota_A$ and $y\times x =\iota_B$. If $A$ and $B$ are $KK$-equivalent (for example, if they are isomorphic as $\mathrm{C}^*$-algebras), then there are induced group isomorphisms $K_*(A) \cong K_*(B)$. The converse holds under the UCT.

\begin{theorem}[UCT]
A separable $\mathrm{C}^*$-algebra is in $\mathcal{N}$ if and only if, for every separable $\mathrm{C}^*$-algebra $B$, there is a short exact sequence of $\mathbb{Z}/2\mathbb{Z}$-graded abelian groups
\[
\begin{tikzcd}[column sep=small]
0 \arrow[r] & \mathrm{Ext}^1_\mathbb{Z}(K_*(A),K_{*+1}(B)) \arrow[r] & KK_*(A,B) \arrow[r] & \mathrm{Hom}(K_*(A),K_*(B)) \arrow[r] & 0
\end{tikzcd}
\]
that splits unnaturally.
\end{theorem}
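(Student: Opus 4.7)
The plan is to follow the Rosenberg--Schochet strategy: construct the natural transformation that appears in the sequence, prove its exactness on a manageable generating class, and then propagate this property through the operations that generate the bootstrap class $\mathcal{N}$. Concretely, one defines
\[
\gamma\colon KK_*(A,B)\to\mathrm{Hom}(K_*(A),K_*(B))
\]
by sending $x\in KK_*(A,B)$ to the map $y\mapsto y\times x$ on $K_*(A)=KK_*(\mathbb{C},A)$, exploiting the associativity of the Kasparov product. A short computation shows $\gamma$ is natural in both variables, so the sought-after sequence is intrinsic.

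The plan for the ``if'' direction (commutative algebras satisfy the UCT) is to start from the trivial case $A=\mathbb{C}$: here $K_*(\mathbb{C})=\mathbb{Z}$ is projective, so $\mathrm{Ext}^1_\mathbb{Z}(\mathbb{Z},K_{*+1}(B))=0$ and $\gamma$ is an isomorphism onto $\mathrm{Hom}(\mathbb{Z},K_*(B))\cong K_*(B)\cong KK_*(\mathbb{C},B)$. I would then show that the class of $A$'s for which the UCT sequence is exact for every $B$ is closed under suspension (immediate from Bott periodicity), countable direct sums and sequential inductive limits (by continuity of $KK$ in the first variable), and two-out-of-three in mapping cone exact sequences (via the six-term sequence \eqref{eqn:6term}, the corresponding long exact sequence of $\mathrm{Hom}$ and $\mathrm{Ext}^1$, and the five lemma applied to the resulting ladder). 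Since every separable commutative $\mathrm{C}^*$-algebra is $C_0(X)$ for a second-countable locally compact Hausdorff $X$, which can be approximated by finite CW complexes that are built by cone-attachments from points, these closure properties push exactness from $\mathbb{C}$ to the whole commutative world. That the property then propagates across any $KK$-equivalence is immediate: both $K_*(A)$ and the bifunctor $KK_*(A,-)$ depend only on the $KK$-class of $A$.

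For the ``only if'' direction, suppose $A$ is separable and satisfies the UCT sequence against every $B$. I would first produce a commutative $\mathrm{C}^*$-algebra $C$ with $K_*(C)\cong K_*(A)$ as $\mathbb{Z}/2\mathbb{Z}$-graded abelian groups; this is possible because every countable abelian group is realisable as $K_0$ (and separately as $K_1$) of a suitable second-countable locally compact space, and direct sums correspond to disjoint unions. Applying the UCT for the pair $(C,A)$, surjectivity of $\gamma$ lets me lift the chosen isomorphism $K_*(C)\xrightarrow{\cong}K_*(A)$ to some $x\in KK_0(C,A)$. To upgrade $x$ to a $KK$-equivalence, I would use the already-established UCT for $C$ (obtained via the previous paragraph) together with the UCT for $(A,A)$ and $(A,C)$: the Kasparov product with $x$ induces a ladder between the UCT short exact sequences that is an isomorphism on both the $\mathrm{Ext}^1$ and $\mathrm{Hom}$ columns, so the five lemma delivers a $y\in KK_0(A,C)$ inverting $x$.

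The main obstacle is the inductive propagation in the ``if'' direction, specifically organising enough commutative algebras to reach every $C_0(X)$ by finitely many cone/suspension/colimit operations starting from $\mathbb{C}$, while simultaneously verifying that the UCT sequence plays well with the six-term exact sequence of $K$-theory in both variables. The delicate point is that the map $\gamma$ is naturally defined only in one variable at a time, so the compatibility of the index and exponential maps in \eqref{eqn:exp} and \eqref{eqn:index} with the $\mathrm{Hom}$/$\mathrm{Ext}^1$ connecting maps must be checked carefully before the five lemma can be invoked. The unnatural splitting, finally, is a consequence of splitting the quotient map $F_*\twoheadrightarrow K_*(A)$ by a free resolution at the level of abelian groups; this splitting cannot be made functorial in $A$, which is the source of the word ``unnaturally'' in the statement.
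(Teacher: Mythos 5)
The paper does not prove this statement: it is quoted as the Rosenberg--Schochet theorem \cite{Rosenberg:1987fp}, packaged with Skandalis's observation that the UCT class can be taken to be the closure of the separable commutative $\mathrm{C}^*$-algebras under $KK$-equivalence, so there is no in-paper argument to compare against. Your outline is essentially the standard published proof: exactness of the sequence for $A=\mathbb{C}$, propagation through the operations that generate the commutative algebras, invariance under $KK$-equivalence, and, for the converse, realisation of $K_*(A)$ by a commutative algebra followed by a five-lemma upgrade of a $K_*$-isomorphism to a $KK$-equivalence (this last step is exactly \cite[\S23.10]{Blackadar:1998qf}). One cosmetic slip: your labels are swapped relative to the statement --- ``$A\in\mathcal{N}$ implies the UCT sequence'' is the \emph{only if} direction.

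The one step that would fail as you justify it is closure under sequential inductive limits ``by continuity of $KK$ in the first variable''. The functor $KK_*(\cdot,B)$ is not continuous: for $A=\varinjlim A_n$ one has only a Milnor sequence $0\to{\lim}^1\,KK_{*+1}(A_n,B)\to KK_*(A,B)\to\varprojlim KK_*(A_n,B)\to 0$, and reconciling the ${\lim}^1$ term with the $\mathrm{Ext}^1_{\mathbb{Z}}$ term for the limit algebra (via the corresponding ${\lim}/{\lim}^1$ sequences for $\mathrm{Hom}$ and $\mathrm{Ext}^1_{\mathbb{Z}}$ applied to $K_*(A_n)$) is the genuinely delicate part of Rosenberg and Schochet's argument; it cannot be waved through. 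A smaller gap: to realise an arbitrary countable $\mathbb{Z}/2\mathbb{Z}$-graded group as $K_*$ of a commutative algebra, ``direct sums correspond to disjoint unions'' does not suffice (a countable abelian group need not be a direct sum of cyclic groups, and a free resolution cannot in general be realised by $^*$-homomorphisms between direct sums of copies of $\mathbb{C}$, since such maps are positive on $K_0$); one needs suspensions, mapping cones of maps between wedges of spheres and inductive limits --- the ``geometric resolutions'' of \cite{Schochet:1982vp}, which are also the true source of the unnatural splitting. With those two repairs your plan is the standard proof.
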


In particular:
\begin{itemize}
\item $KK_*(A,B) \cong \mathrm{Hom}(K_*(A),K_*(B))$ if $K_*(A)$ is free or $K_*(B)$ is divisible;
\item if $A,B\in\mathcal{N}$, then $A$ is $KK$-equivalent to $B$ if and only if $K_*(A)\cong K_*(B)$ (see \cite[\S23.10]{Blackadar:1998qf}).
\end{itemize}

By work of Skandalis \cite{Skandalis:1988aa} (a `sobering example' in Higson's words \cite{Higson:1990qd}), it is known that not every separable $\mathrm{C}^*$-algebra is in $\mathcal{N}$. On the other hand, one of the major open problems in operator algebras is to decide whether every separable \emph{nuclear} $\mathrm{C}^*$-algebra is in $\mathcal{N}$. We write $\mathcal{N}_{\mathrm{nuc}}$ for the class of nuclear $\mathrm{C}^*$-algebras in $\mathcal{N}$, and refer to it as the \emph{bootstrap class} of Rosenberg and Schochet \cite{Rosenberg:1987fp}. The bootstrap class $\mathcal{N}_{\mathrm{nuc}}$ contains $\mathbb{C}$, is closed under countable inductive limits and $KK$-equivalence, and if two $\mathrm{C}^*$-algebras in a short exact sequence are in $\mathcal{N}_{\mathrm{nuc}}$, then so is the third. From this, one can show that $\mathcal{N}_{\mathrm{nuc}}$ contains any countable inductive limit of type $\rm{I}$ $\mathrm{C}^*$-algebras, and is closed under tensor products and crossed products by actions of $\mathbb{Z}$ or $\mathbb{R}$ (see \cite[22.3.5]{Blackadar:1998qf}). By a theorem of Tu \cite{Tu:1999aa}, every amenable groupoid $\mathrm{C}^*$-algebra also lives in $\mathcal{N}_{\mathrm{nuc}}$.

A common use of the UCT in classification is  Schochet's K\"{u}nneth theorem for tensor products \cite{Schochet:1982vp}: if $A\in\mathcal{N}_{\mathrm{nuc}}$, then for any separable $\mathrm{C}^*$-algebra $B$, there is a short exact sequence
\[
\begin{tikzcd}[column sep=small]
0 \arrow[r] & K_*(A)\otimes K_*(B) \arrow[r,"\alpha"] & K_*(A\otimes B) \arrow[r] & \mathrm{Tor}^1_\mathbb{Z}(K_*(A),K_{*+1}(B)) \arrow[r] & 0
\end{tikzcd}
\]
that splits unnaturally. Note that $\alpha$ is an isomorphism if $K_*(A)$ or $K_*(B)$ is torsion free. In particular, $K_*(A\otimes\mathcal{Z})\cong K_*(A)$ for any separable $\mathrm{C}^*$-algebra $A$.

Beyond this, the role of the UCT in classification as a tool for solving lifting problems is made most clear in the work of Schafhauser \cite{Schafhauser:2020vs} and his coauthors \cite{Carrion:wz}. It should however be emphasised that the Kirchberg--Phillips theorem (that is, the classification of purely infinite, simple, separable, nuclear $\mathrm{C}^*$-algebras) is $KK$-theoretic, with the $K$-theoretic classification then being a consequence of the UCT. See \cite{Kirchberg:2000kq}, \cite[\S4.2]{Phillips:2000fj} and \cite[Chapter 8]{Rordam:2002yu}.

\section{Traces and the Cuntz semigroup} \label{section:traces}

\subsection{Traces} In these notes, a \emph{trace} on a unital $\mathrm{C}^*$-algebra $A$ means a tracial state, that is, a positive linear functional $\tau\colon A\to\mathbb{C}$ with $\|\tau\|=1=\tau(1)$ that satisfies the trace identity $\tau(uau^*)=\tau(a)$ for every $a\in A$ and unitary $u\in A$. (This is equivalent to requiring that $\tau(xy)=\tau(yx)$ for every $x,y\in A$; see \cite[Proposition 5.2.2]{Pedersen:1979rt}.) We write $T(A)$ for the set of traces on $A$ and refer to it as the \emph{trace space of $A$}.

The trace space is a compact, convex subset of the unit ball of $A^*$, equipped with the $w^*$-topology (that is, $\tau_i\to\tau$ if and only if $\tau_i(a)\to\tau(a)$ for every $a\in A$). By a result of Thoma \cite{Thoma:1964aa} (see also \cite[Theorem 3.1]{Pedersen:1969kq}), $T(A)$ is in fact a \emph{metrisable Choquet simplex}. To motivate the definition of such a structure, we start from the understanding that, for an integer $n\ge0$, an \emph{$n$-dimensional simplex} $K$ is the convex hull of $n+1$ affinely independent points $\{e_0,\dots,e_n\}$ in a vector space. These points form the \emph{extreme boundary} $\partial_e(K)$ of $K$, which is defined by the property that $e\in\partial_e(K)$ if and only if the only way to write $e$ as a convex combination $e=\lambda x + (1-\lambda)y$ of points $x,y\in K$ with $\lambda\in(0,1)$ is with $x=y=e$.

Let $K=\mathrm{conv}\{e_0,\dots,e_n\}$ as above and fix $x\in K$ with barycentric coordinates $(\lambda_0,\dots,\lambda_n)$, that is, $x=\sum_{i=0}^n\lambda_i e_i$. Write $\mathrm{Aff}(K)$ for the set of continuous affine maps $f\to\mathbb{R}$ (notation that we will continue to use for any compact convex set $K$). Then, $f(x) = \sum_{i=0}^n\lambda_i f(e_i)$ for every $f\in\mathrm{Aff}(K)$, which can be rewritten as
\begin{equation} \label{eqn:rep}
f(x) = \int_K f\,d\mu_x \quad \text{ for every } f\in\mathrm{Aff}(K).
\end{equation}
Here, $\mu_x=\sum_{i=0}^n\lambda_i\delta_{e_i}$, $\delta_y$ denoting the point mass at $y$. We say that $\mu_x$ is a \emph{representing measure for $x$} whenever \eqref{eqn:rep} holds. While $\delta_x$ would, trivially, be another representing measure for $x$, affine independence of the extreme points of $K$ implies that $\mu_x=\sum_{i=0}^n\lambda_i\delta_{e_i}$ is the unique representing measure \emph{that is supported on $\partial_e(K)$}.

\begin{definition} \label{def:choquet}
A compact, convex subset $K$ of a Hausdorff topological vector space is said to be a \emph{metrisable Choquet simplex} if $K$ is metrisable and every $x\in K$ has a unique representing Borel probability measure $\mu_x$ supported on $\partial_e(K)$ (in the sense that $\mu_x(K\setminus\partial_e(K))=0$).
\end{definition}

It is a straightforward exercise to show that, in the metrisable setting, $\partial_e(K)$ is a $G_\delta$ (in particular, Borel) subset of $K$. (This need not be true in general, and nonseparable integral representation theory is somewhat more delicate. See \cite[Chapter 4]{Phelps:2001rz}.) Moreover, by Choquet's theorem (see \cite[Chapter 3]{Phelps:2001rz}), a representing measure on $\partial_e(K)$ always exists. It is the requirement of uniqueness that characterises simplices. An equivalent characterisation is that the vector space generated by the cone over $K$ is a vector lattice (see \cite[Chapter 10]{Phelps:2001rz}), which is in fact the property of $T(A)$ that is demonstrated in \cite{Pedersen:1969kq} and \cite{Thoma:1964aa}.

\begin{example}
\begin{enumerate}[1.]
\item By design, finite-dimensional simplices are metrisable Choquet simplices. Note that an $n$-simplex is affinely homeomorphic to the trace space of the direct sum of $n+1$ matrix algebras.
\item The space $K=\mathcal{M}_1^+(X)$ of Borel probability measures on a compact metrisable space $X$ (which by the Riesz representation theorem is affinely homeomorphic to $T(C(X)$) is a particular kind of Choquet simplex called a \emph{Bauer simplex}, meaning that $\partial_e(K)$ is compact (as in this case it is homeomorphic to $X$ via $x\mapsto\delta_x$). In fact, every Bauer simplex is of this form (see \cite[Corollary \rm{II}.4.2]{Alfsen:1971hl}). For Bauer simplices, Choquet's theorem gives the same information as Krein--Milman (which in general provides a representing measure supported on $\overline{\partial_e(K)}$).
\item The \emph{Poulsen simplex} is an infinite-dimensional metrisable Choquet simplex with the defining property that $\partial_e(K)$ is dense in $K$. It is the `Fra\"{i}ss\'{e} limit' of the class of finite-dimensional simplices (see \cite{Conley:2018vn}).
\end{enumerate}
\end{example}

The Poulsen simplex, or indeed \emph{any} simplex, is a possible trace space for objects in $\mathcal{E}$. This is implicit in the fact mentioned in Section~\ref{section:category} that all possibilities for the Elliott invariant are attained. Actually, if one is only interested in constructing a classifiable $\mathrm{C}^*$-algebra with a prescribed simplex $\Delta$ as its trace space, this goal can be achieved in the category of AF algebras \cite{Goodearl:1977dq,Blackadar:1980zr} or AI algebras \cite{Thomsen:1994qy} using the Lazar--Lindenstrauss theorem \cite[Theorem 5.2 and its Corollary]{Lazar:1971kx}, which shows that $\Delta$ is the projective limit of a sequence of finite-dimensional simplices with connecting maps that are continuous, affine and surjective. One could almost take this as the \emph{definition} of a metrisable Choquet simplex, a point of view to which we will return in Section~\ref{section:random}. On the other hand, in Section~\ref{section:chaos} we will stick with Definition~\ref{def:choquet}.

\subsection{The pairing map} Every trace $\tau\in T(A)$ extends to a (non-normalised) trace $\mathrm{Tr}_n\otimes\tau$ on any matrix algebra $M_n(A)\cong M_n\otimes A$ over $A$ (indeed, to a densely finite lower semicontinuous trace on $A\otimes\mathbb{K}(\mathcal{H})$) and provides a well-defined map $\tau_*$ on Murray--von Neumann equivalence classes of projections, and therefore on $K_0(A)$. The map $\tau_*\colon K_0(A) \to \mathbb{R}$ is a \emph{state}: it is a group homomorphism that maps $K_0(A)_+$ to $\mathbb{R}_+$ and $[1_A]$ to $1$. If $A$ is exact, then \emph{every} state on $K_0(A)$ is of this form (see \cite[Theorem 1.1.11]{Rordam:2002yu}). Dually, the pairing between $T(A)$ and $K_0(A)$ is captured by the group homomorphism $\rho_A\colon K_0(A) \to \mathrm{Aff}(T(A))$, $\rho_A(x)(\tau) = \tau_*(x)$. For the stably finite, real rank zero objects in $\mathcal{E}$, the image of $\rho_A$ is uniformly dense in $\mathrm{Aff}(T(A))$ (see \cite[Theorem 6.9.3]{Blackadar:1998qf}).

The necessity of $T(\cdot)$ as a component of the Elliott invariant is demonstrated by \cite{Goodearl:1992hl}. For an example showing that the pairing is also needed, see \cite[p.29]{Rordam:2002yu}. Moreover, for the stably finite objects in $\mathcal{E}$, the pairing can be used to recover the order structure on $K_0(A)$ (see  \cite[Theorem 6.8.5]{Blackadar:1998qf} and \cite[Theorem 1]{Gong:2000kq}, which shows that weak unperforation, meaning $x>0\iff nx>0$ for some $n\ge1$, holds in $K_0(A)$ for any simple, unital, $\mathcal{Z}$-stable $\mathrm{C}^*$-algebra $A$). Because of this, the positive cone $K_0(A)_+$  is suppressed in some modern presentations of classification \cite{Carrion:wz}.

\subsection{The Cuntz semigroup}

Elliott's classification of AF algebras does not require simplicity, and in fact nonsimple \emph{AI} algebras are also classifiable. However, the invariant in this case is not $\mathrm{Ell}$ but rather $\mathrm{Cu}$ (which in general encodes the ideal lattice of a $\mathrm{C}^*$-algebra; see \cite[Theorem 5.5]{Gardella:2022aa}).

\begin{definition}
Let $A$ be a $\mathrm{C}^*$-algebra. For $a,b\in A_+$, say that $a$ is \emph{Cuntz subequivalent to $b$}, written $a\lesssim b$, if there is a sequence $(x_n)_{n\in\mathbb{N}}$ in $A$ such that $x_nbx_n^*\to a$ in norm. If $a\lesssim b$ and $b\lesssim a$, say that $a$ is \emph{Cuntz equivalent to $b$} and write $a\sim b$. The \emph{Cuntz semigroup} $\mathrm{Cu}(A)=(A\otimes\mathbb{K}(\mathcal{H}))_+/\sim$ is a positively ordered monoid with the addition $[a]+[b]=\left[\begin{pmatrix}a&0\\0&b\end{pmatrix}\right]$ and order $[a]\le[b]$ if and only if $a\lesssim b$.
\end{definition}

$\mathrm{Cu}(\cdot)$ is a stable, additive, continuous functor from the category of $\mathrm{C}^*$-algebras to the `Cuntz category' (see \cite{Ara:2009cs,Gardella:2022aa}). One of the key features  of objects in this category (which we will not define here, instead referring to \cite[\S4]{Gardella:2022aa}) is the existence of suprema of increasing sequences. This is in contrast to the original `nonstable' version $W(\cdot)$ of $\mathrm{Cu}(\cdot)$, introduced in \cite{Cuntz:1978rz}. Morphisms in the Cuntz category (called \emph{$\mathrm{Cu}$-morphisms}) are monoid morphisms that preserve the order structure, suprema of increasing sequences and also the `compact containment' relation glossed over in the present exposition.

\begin{example} \label{ex:cuntz}
\begin{enumerate}[1.]
\item \label{exit1} If $p,q\in A$ are projections, then $p\lesssim q$ if and only if $p$ is Murray--von Neumann equivalent to a subprojection of $q$. There is therefore a natural map $V(A)\to \mathrm{Cu}(A)$, which is injective if $A$ is stably finite. This need not be true outside of the stably finite setting (indeed, see Example~\ref{ex:cuntz}.\ref{exit5} below).
\item \label{exit2} For any $n\in\mathbb{N}$, $\mathrm{Cu}(M_n) \cong \mathrm{Cu}(\mathbb{C}) \cong \mathrm{Cu}(\mathbb{K}(\mathcal{H})) \cong \overline{\mathbb{N}}=\mathbb{N}\cup\{0,\infty\}$ via $[a]\mapsto\mathrm{rank}(a)$.
\item \label{exit3} In $C(X)$, $f\lesssim g$ if and only if the open support of $f$ is contained in the open support of $g$. So, two continuous functions $X\to[0,\infty)$ are Cuntz equivalent if and only if their open supports are the same.
\item \label{exit4} $\mathrm{Cu}(C([0,1]))\cong\mathrm{Lsc}([0,1],\overline{\mathbb{N}})$. More generally, recall from \cite{Eilers:1998yu} that a \emph{one-dimensional noncommutative CW (NCCW) complex} is a pullback $B$ of the form
\begin{equation} \label{eqn:nccw}
\begin{tikzcd}[column sep=large]
B \arrow[d] \arrow[r] & E \arrow[d,"\varphi=\varphi_0\oplus\varphi_1"]\\
C([0,1],F) \arrow[r,"\rho=\mathrm{ev}_0\oplus\mathrm{ev}_1"] & F\oplus F
\end{tikzcd}
\end{equation}
for some finite-dimensional $\mathrm{C}^*$-algebras $E=\bigoplus_{i=1}^n M_{k_i}$ and $F=\bigoplus_{i=1}^m M_{l_i}$ and $^*$-homomorphisms $\varphi_0,\varphi_1\colon E\to F$ (the gluing maps at the boundary of the interval). By \cite[Corollary 3.5]{Antoine:2011zl}, $\mathrm{Cu}(B)$ is isomorphic to the pullback
\[
\mathrm{Cu}(B) \cong \left\{(f,v) \in \mathrm{Lsc}\left([0,1],\overline{\mathbb{N}}^m\right)\oplus\overline{\mathbb{N}}^n \mid f(0) = M_0v,\: f(1)=M_1v\right\},
\]
where $M_i\in M_{m\times n}(\mathbb{N})$ is the matrix of $\mathrm{Cu}(\varphi_i)$ (or equivalently, the matrix of $K_0(\varphi_i)$).
\item \label{exit5} If $A$ is simple and purely infinite, then so is $A\otimes\mathbb{K}(\mathcal{H})$ (see \cite[Proposition 4.1.8]{Rordam:2002yu}) and by definition (see Section~\ref{section:ktheory}) all nonzero positive elements in $A\otimes\mathbb{K}(\mathcal{H})$ are Cuntz equivalent. It follows that $\mathrm{Cu}(A)\cong\{0,\infty\}$.
\end{enumerate}
\end{example}

Building on \cite{Thomsen:1992kq}, it was shown in \cite{Ciuperca:2008rz} that $\mathrm{Cu}$ classifies unital $^*$-homomorphisms from $C([0,1])$ to $\mathrm{C}^*$-algebras of stable rank one (in the sense described in Theorem~\ref{thm:rob} below). The range of codomains for which this is true was expanded in \cite{Robert:2010rz}, and the range of domains was expanded in \cite{Ciuperca:2011wd} to include functions on trees, and then via a beautiful reduction argument in \cite{Robert:2010qy} to include one-dimensional noncommutative CW (NCCW) complexes with trivial $K_1$.

\begin{theorem} \cite{Robert:2010qy} \label{thm:rob}
$\mathrm{Cu}$ classifies unital $^*$-homomorphisms from (inductive limits of direct sums of $\mathrm{C}^*$-algebras stably isomorphic to) unital, $K_1$-trivial one-dimensional NCCW complexes $B$ to $\mathrm{C}^*$-algebras $A$ of stable rank one, meaning that every $\mathrm{Cu}$-morphism $\alpha$ from $\mathrm{Cu}(B)$ to $\mathrm{Cu}(A)$ with $\alpha([1])= [1]$ can be lifted to a unital $^*$-homomorphism $B\to A$, uniquely up to approximate unitary equivalence.
\end{theorem}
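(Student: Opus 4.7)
The strategy is to exploit the pullback description \eqref{eqn:nccw} of a one-dimensional NCCW complex $B$ together with the corresponding pullback description of $\mathrm{Cu}(B)$ from Example~\ref{ex:cuntz}.\ref{exit4}, reducing the construction of a lift to two more tractable pieces: the restriction of $\alpha$ to the ``interval part'' $C([0,1],F)$, and the restriction to the ``endpoint part'' $E$. Standard Elliott-style intertwining and continuity of $\mathrm{Cu}(\cdot)$ let me reduce at the outset to the case that $B$ is a single unital, $K_1$-trivial one-dimensional NCCW complex; the passage to inductive limits of direct sums of such algebras (and between $B$ and algebras stably isomorphic to it) is a formal consequence of the single-building-block statement.

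\textbf{Existence.} Given a unital $\mathrm{Cu}$-morphism $\alpha\colon\mathrm{Cu}(B)\to\mathrm{Cu}(A)$, composing with the pullback maps in \eqref{eqn:nccw} produces compatible $\mathrm{Cu}$-morphisms $\alpha_0\colon\mathrm{Cu}(C([0,1],F))\to\mathrm{Cu}(A)$ and $\alpha_1\colon\mathrm{Cu}(E)\to\mathrm{Cu}(A)$. The Ciuperca--Elliott--Santiago theorems \cite{Ciuperca:2008rz,Ciuperca:2011wd} (which handle $C([0,1])$ and then continuous functions on trees such as $C([0,1],F)$) lift $\alpha_0$ to a unital $^*$-homomorphism $\varphi_0\colon C([0,1],F)\to A$, while $\alpha_1$ lifts to $\varphi_1\colon E\to A$ by choosing matrix units in $A$ with Cuntz classes prescribed by $\alpha_1$ on the minimal projections of $E$ (possible because stable rank one forces cancellation in the projection monoid). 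The construction will be complete once $\varphi_0\circ\mathrm{ev}_j=\varphi_1\circ\varphi_j$ holds on the nose for $j=0,1$, since the universal property of the pullback then assembles $(\varphi_0,\varphi_1)$ into the desired $\varphi\colon B\to A$ lifting $\alpha$.

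\textbf{Gluing and uniqueness.} The two boundary compositions $\varphi_0\circ\mathrm{ev}_j$ and $\varphi_1\circ\varphi_j$ are $^*$-homomorphisms $E\to A$ that induce the same $\mathrm{Cu}$-morphism, so by the uniqueness clause of the interval classification they are approximately unitarily equivalent. The heart of the proof, and the main obstacle, is to upgrade this approximate equivalence to exact equality by prescribing the boundary values of $\varphi_0$ and interpolating, which amounts to filling in a unitary path in a matrix algebra over $A$ between two specified unitaries. The obstruction to the existence of such a path lies in $K_1$, and it is precisely the assumption $K_1(B)=0$ that kills it, ensuring that the boundary-matching unitaries are homotopic through unitaries so that $\varphi_0$ can be adjusted without disturbing the $\mathrm{Cu}$-morphism it realises. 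Uniqueness of the resulting lift up to approximate unitary equivalence then follows from the uniqueness parts of \cite{Ciuperca:2008rz,Ciuperca:2011wd} applied to the two pieces, together with the same $K_1$-triviality argument to glue the implementing unitaries into a single one that witnesses the equivalence on all of $B$.
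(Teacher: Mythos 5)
This theorem is quoted from Robert's paper \cite{Robert:2010qy}; the present article offers no proof of it, so your sketch has to be measured against Robert's published argument, and as written it has a gap at its very first step. The pullback \eqref{eqn:nccw} comes equipped with surjections $B\to C([0,1],F)$ and $B\to E$, so functoriality gives $\mathrm{Cu}$-morphisms \emph{out of} $\mathrm{Cu}(B)$ into $\mathrm{Cu}(C([0,1],F))$ and $\mathrm{Cu}(E)$ — the wrong direction. Composing these with $\alpha\colon\mathrm{Cu}(B)\to\mathrm{Cu}(A)$ produces nothing, and there is no natural way to manufacture the maps $\alpha_0\colon\mathrm{Cu}(C([0,1],F))\to\mathrm{Cu}(A)$ and $\alpha_1\colon\mathrm{Cu}(E)\to\mathrm{Cu}(A)$ on which your whole construction rests: by Example~\ref{ex:cuntz}, $\mathrm{Cu}(B)$ sits inside $\mathrm{Lsc}([0,1],\overline{\mathbb{N}}^m)\oplus\overline{\mathbb{N}}^n$ as a \emph{proper} subsemigroup (for $Z_{p,q}$ with $p,q>1$ the constant function $1$ is not even in the image of the first coordinate projection), and a $\mathrm{Cu}$-morphism defined on a sub-semigroup does not extend. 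The same direction problem undermines the uniqueness step: $E$ is a quotient of $B$, not a subalgebra, so two lifts $\varphi,\psi\colon B\to A$ do not restrict to homomorphisms on the pieces to which Ciuperca--Elliott--Santiago could be applied.

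Even setting that aside, the step you yourself call the heart of the proof — upgrading approximate unitary equivalence of the boundary compositions to exact equality by interpolating a unitary path — is asserted rather than carried out, and the assertion that "$K_1(B)=0$ kills" the obstruction conflates two different things: the relevant obstruction to connecting the matching unitaries lives in unitary groups over $A$ (stable rank one gives $K_1(A)\cong U(A)/U_0(A)$ but certainly not $K_1(A)=0$), whereas $K_1(B)=0$ is, per the Remark following the theorem, the condition that $K_0(\varphi_0)-K_0(\varphi_1)$ be surjective. Turning the latter into control over the former is precisely the content of Robert's argument, which proceeds quite differently: uniqueness is established first (at the level of the augmented invariant $\mathrm{Cu}\,\widetilde{}\,$), and the $K_1$-trivial NCCW case is obtained from the already-known classification for functions on trees \cite{Ciuperca:2008rz,Ciuperca:2011wd} by the "reduction argument" the paper alludes to — constructing suitable homomorphisms between $B$ and algebras of the known class and running an approximate intertwining — rather than by gluing local lifts over the pullback. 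As it stands, your proposal neither defines the objects it needs in step one nor proves the step it correctly identifies as the main difficulty.
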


We will make use of this powerful result of Robert in Section~\ref{section:weyl}.

\begin{remark}
\begin{enumerate}[1.]
\item Robert's theorem also holds in the nonunital setting, provided that $\mathrm{Cu}(A)$ is replaced by the augmented invariant $\mathrm{Cu}\,\widetilde{}\,(A)$ (see \cite{Robert:2010qy}).
\item Triviality of $K_1(B)$ is equivalent to surjectivity of $K_0(\varphi_0)-K_0(\varphi_1)\colon \mathbb{Z}^n \to \mathbb{Z}^m$ (where $\varphi_0,\varphi_1$ are as in \eqref{eqn:nccw}). Examples of domains covered by Robert's theorem are $C([0,1],M_n)$, prime dimension drop algebras $Z_{p,q}$ and the stably projectionless algebras described in Example~\ref{ex:mvn} (but not, for example, $C(S^1,M_n)$).
\end{enumerate}
\end{remark}

Every trace $\tau\in T(A)$ yields a functional on $\mathrm{Cu}(A)$, that is, a monoid morphism $\mathrm{Cu}(A)\to[0,\infty]$ that preserves order and suprema of increasing sequences, defined by $d_\tau([a])=\sup_{n\in\mathbb{N}}\tau(a^\frac{1}{n})$ and called a \emph{dimension function}. (In fact, \emph{every} functional on $\mathrm{Cu}(A)$ arises (quasi)tracially; see \cite[Proposition 4.2]{Elliott:2009kq}.)

\begin{example}
\begin{enumerate}[1.]
\item If $A=C(X)$, $\tau\in T(A)$ is the trace corresponding to $\mu\in\mathcal{M}_1^+(X)$ and $f\in C(X)_+$, then $d_\tau(f)$ is the $\mu$-measure of the open support of $f$.
\item If $p\in A$ is a projection, then $d_\tau([p]) = \tau(p)$ for every $\tau\in T(A)$.
\end{enumerate}
\end{example}

Dually, every $[a]\in\mathrm{Cu}(A)$ defines a lower semicontinuous function $\widehat{[a]}\colon T(A) \to [0,\infty]$, $\widehat{[a]}(\tau) = d_\tau(a)$. For the stably finite objects in $\mathcal{E}$, every such functional on $T(A)$ is attainable as $\widehat{[a]}$ for some $a$ (this uses `weak divisibility') and for elements not equivalent to projections, $\widehat{[a]}$ determines the Cuntz class of $a$ (this uses `strict comparison'). In fact, the following holds.
  
\begin{theorem}\cite{Brown:2008mz,Brown:2007rz} \label{thm:bpt}
If $A$ is a simple, separable, unital, exact, $\mathcal{Z}$-stable $\mathrm{C}^*$-algebra with $T(A)\ne\emptyset$, then
\begin{equation} \label{eqn:bpt}
\mathrm{Cu}(A) \cong V(A) \sqcup \mathrm{LAff}_+(T(A)),
\end{equation}
where $\mathrm{LAff}_+(T(A))$ denotes the set of functions $T(A)\to(0,\infty]$ that are pointwise suprema of increasing sequences of strictly positive elements of $\mathrm{Aff}(T(A))$.
\end{theorem}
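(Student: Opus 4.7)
The plan is to decompose $\mathrm{Cu}(A)$ into classes admitting a projection representative and those that are \emph{purely positive} (meaning $[(a-\varepsilon)_+]<[a]$ for every $\varepsilon>0$), identify the former with $V(A)$, and identify the latter with $\mathrm{LAff}_+(T(A))$ via the rank map $[a]\mapsto\widehat{[a]}$. Three standard consequences of the hypotheses will do the heavy lifting: stable finiteness (from Remark~\ref{remark:finite}(3), since $A$ is simple, unital, exact and has a trace), strict comparison of positive elements by traces (a theorem of R\o{}rdam, \cite{Rordam:2004kq}, for simple, exact, $\mathcal{Z}$-stable algebras), and weak divisibility (also a consequence of $\mathcal{Z}$-stability).

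First, stable finiteness makes the natural monoid morphism $V(A)\to\mathrm{Cu}(A)$ injective by Example~\ref{ex:cuntz}(\ref{exit1}), producing one summand and, as its complement, the set $\mathrm{Cu}(A)_+$ of purely positive classes. The rank map $[a]\mapsto\widehat{[a]}$ is order preserving and additive, and on $\mathrm{Cu}(A)_+$ takes values in $\mathrm{LAff}_+(T(A))$: simplicity forces $\tau(a^{1/n})>0$ for every $\tau\in T(A)$, exhibiting $\widehat{[a]}$ as the supremum of the strictly positive continuous affine functions $\tau\mapsto\tau(a^{1/n})$. Injectivity on $\mathrm{Cu}(A)_+$ uses strict comparison: if $\widehat{[a]}=\widehat{[b]}$, then for every $\varepsilon>0$ we have $\widehat{[(a-\varepsilon)_+]}<\widehat{[b]}$ pointwise, whence $[(a-\varepsilon)_+]\le[b]$; passing to the supremum over $\varepsilon$ gives $[a]\le[b]$, and the reverse inequality is symmetric.

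The main obstacle is surjectivity. Given $f\in\mathrm{LAff}_+(T(A))$, write $f=\sup_n f_n$ with each $f_n\in\mathrm{Aff}(T(A))$ strictly positive and bounded. Weak divisibility --- the ability to split $1_A$ into arbitrarily many pairwise orthogonal, mutually Cuntz-equivalent pieces, which follows from any unital embedding $Z_{p,p+1}\hookrightarrow\mathcal{Z}\subseteq A$ --- together with a partition-of-unity argument on the compact Choquet simplex $T(A)$, lets one approximate each $f_n$ uniformly by rank functions of elements built from such pieces and thence produce purely positive $a_n\in(A\otimes\mathbb{K}(\mathcal{H}))_+$ with $\widehat{[a_n]}=f_n$ and $[a_n]\le[a_{n+1}]$. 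The supremum $[a]:=\sup_n[a_n]$ exists because $\mathrm{Cu}(A)$ lies in the Cuntz category, and satisfies $\widehat{[a]}=f$.

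Assembling a bijection and checking it is a $\mathrm{Cu}$-isomorphism is then routine: addition respects the decomposition because adding any purely positive element destroys a spectral gap at $0$, so any sum involving an element of $\mathrm{Cu}(A)_+$ lies in $\mathrm{Cu}(A)_+$ and is computed via pointwise addition of rank functions; preservation of suprema and of compact containment follows from strict comparison once more.
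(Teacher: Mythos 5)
The paper does not prove Theorem~\ref{thm:bpt} at all: it is quoted from \cite{Brown:2008mz,Brown:2007rz}, and the text explicitly defers both the proof and the description of the monoid and order structure on the right-hand side of \eqref{eqn:bpt} to those references together with \cite{Ara:2009cs,Elliott:2009kq}. Your sketch reconstructs exactly the Brown--Perera--Toms strategy (projection classes versus purely positive classes, injectivity of $V(A)\to\mathrm{Cu}(A)$ from stable finiteness, injectivity of the rank map from strict comparison, surjectivity from divisibility), so at the level of approach it coincides with the cited sources, and the outline is sound.

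Three points are thinner than they should be. First, the dichotomy underlying your decomposition --- that every Cuntz class either contains a projection or is purely positive --- is itself a lemma requiring proof (it uses stable finiteness together with strict comparison, or stable rank one, to rule out classes that are neither); without it, $V(A)$ and $\mathrm{Cu}(A)_+$ need not exhaust $\mathrm{Cu}(A)$. Likewise the pointwise strict inequality $\widehat{[(a-\varepsilon)_+]}<\widehat{[a]}$ needs the observation that some nonzero $g(a)$ with $g$ supported in $(0,\varepsilon)$ is orthogonal to $(a-\varepsilon)_+$ and has strictly positive rank at every trace by simplicity. Second, your formulation of divisibility is literally false: pairwise orthogonal positive elements summing to $1_A$ are automatically projections, so the unit of a projectionless algebra such as $\mathcal{Z}$ cannot be ``split'' in the way you describe; the correct input is almost divisibility, namely that for every $n$ there is $[b]$ with $n[b]\le[1_A]\le(n+1)[b]$, which is what $\mathcal{Z}$-stability actually provides. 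Third, and most significantly, the surjectivity step --- realising every strictly positive continuous affine function on $T(A)$ as a rank function before passing to suprema --- is the technical heart of \cite{Brown:2008mz}, and ``a partition-of-unity argument on $T(A)$'' does not by itself manufacture an element of $A\otimes\mathbb{K}(\mathcal{H})$ with a prescribed rank function; this is where essentially all of the work in the cited papers is concentrated. With those caveats, the proposal is a faithful pr\'{e}cis of the known proof rather than a complete or independent one.
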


In particular, Robert's theorem reduces to classification by $\mathrm{Ell}$ for simple inductive limits of $K_1$-trivial one-dimensional NCCW complexes (see \cite[\S6]{Robert:2010qy}).

For the proof of Theorem~\ref{thm:bpt} (and indeed the details of the monoid and order structure on the right-hand side of \eqref{eqn:bpt}), see \cite{Brown:2008mz,Brown:2007rz}, \cite{Ara:2009cs} and also \cite{Elliott:2009kq}, which covers $\mathrm{C}^*$-algebras that are not necessarily unital. As a special case, note that
\[
\mathrm{Cu}(\mathcal{Z}) \cong \mathbb{N}_0 \sqcup (0,\infty],
\]
where $\mathbb{N}_0=\mathbb{N}\cup\{0\}$ (notation that we will continue to use in the sequel).

\section{Application A: the Weyl problem} \label{section:weyl}

By the spectral theorem, normal matrices $a,b\in M_n$ are unitarily equivalent if and only if their (multi)sets of eigenvalues $\sigma(a)=\{\alpha_1,\dots,\alpha_n\}$ and $\sigma(b)=\{\beta_1,\dots,\beta_n\}$ agree. If $a$ and $b$ are self adjoint, then a much finer statement holds: not only does the spectrum of $a$ determine its unitary equivalence class, but a suitable comparison between $\sigma(a)$ and $\sigma(b)$ measures the distance between the unitary orbits of $a$ and $b$. The following is contained in \cite{Weyl:1912aa}.

\begin{theorem}[Weyl]
If $a,b\in A=M_n$ are self adjoint, then
\begin{equation} \label{eqn:weyl}
d_U(a,b)=\delta(a,b),
\end{equation}
where $d_U$ is the \emph{unitary distance}
\begin{equation} \label{eqn:du}
d_U(a,b) = \inf_{u\in U(A)} \|a-ubu^*\|
\end{equation}
and $\delta$ is the \emph{optimal matching distance}
\begin{equation} \label{eqn:om}
\delta(a,b) = \inf_{\sigma\in S_n} \max_{1\le i \le n} |\alpha_i-\beta_{\sigma(i)}|.
\end{equation}
Moreover, the infimum in the definition of $\delta(a,b)$ is attained by listing the eigenvalues of $a$ and $b$ in increasing order.
\end{theorem}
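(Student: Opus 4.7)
The plan is to prove the two inequalities $d_U(a,b) \le \delta(a,b)$ and $d_U(a,b) \ge \delta(a,b)$ separately, both by fixing simultaneous diagonalisations and exploiting the spectral theorem.

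For the inequality $d_U(a,b) \le \delta(a,b)$, I would pick orthonormal eigenbases $(e_i), (f_i)$ of $\mathbb{C}^n$ with $ae_i = \alpha_i e_i$ and $bf_i = \beta_i f_i$. Given any permutation $\sigma \in S_n$, define the unitary $u \in M_n$ by $uf_{\sigma(i)} = e_i$; then $ubu^* = \sum_i \beta_{\sigma(i)} e_i e_i^*$, so that $a - ubu^* = \sum_i (\alpha_i - \beta_{\sigma(i)}) e_ie_i^*$ is diagonal in the basis $(e_i)$, and its operator norm equals $\max_i |\alpha_i - \beta_{\sigma(i)}|$. Taking the infimum over $\sigma$ yields $d_U(a,b) \le \delta(a,b)$.

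For the reverse inequality, I would invoke the Courant--Fischer min-max characterisation of eigenvalues of a self-adjoint matrix: if we list the eigenvalues of a Hermitian $h$ in increasing order $\lambda_1(h) \le \dots \le \lambda_n(h)$, then $\lambda_k(h) = \max_{\dim V = k}\min_{x \in V,\|x\|=1}\langle hx,x\rangle$. Applied to $a = c + (a-c)$ for any $c$ self-adjoint, this gives Weyl's monotonicity inequality $|\lambda_k(a) - \lambda_k(c)| \le \|a - c\|$ for every $k$. If we now take $c = ubu^*$ for an arbitrary unitary $u$, then $c$ has the same (ordered) spectrum as $b$, so $\lambda_k(c) = \beta_k$ when the $\beta_i$ are listed in increasing order; hence
\[
\max_{1 \le k \le n} |\alpha_k - \beta_k| \le \|a - ubu^*\|,
\]
and taking the infimum over $u$ shows that $\max_k |\alpha_k - \beta_k| \le d_U(a,b)$, where the eigenvalues are now understood in increasing order. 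Since the identity is a permitted permutation in \eqref{eqn:om}, this immediately gives $\delta(a,b) \le d_U(a,b)$ and completes the proof of \eqref{eqn:weyl}.

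The \textbf{moreover} statement then falls out for free: the chain
\[
\delta(a,b) \le \max_k |\alpha_k - \beta_k| \le d_U(a,b) = \delta(a,b)
\]
forces equality throughout, so the increasing ordering realises the infimum in \eqref{eqn:om}. The main technical pinch point is really just the Weyl monotonicity inequality; once that is in hand, everything else is a bookkeeping exercise. One could alternatively prove optimality of the sorted matching combinatorially by a transposition argument (swapping any inversion in $\sigma$ does not increase the max), but this is unnecessary here since it is subsumed by the spectral argument above.
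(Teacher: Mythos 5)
Your proof is correct. Note that the paper does not actually prove this theorem; it simply cites Weyl's 1912 paper, so there is no "paper proof" to compare against. Your argument is the standard one and is complete: the upper bound $d_U\le\delta$ by choosing the unitary that matches the two eigenbases according to a given permutation, and the lower bound via Weyl's perturbation inequality $|\lambda_k(a)-\lambda_k(c)|\le\|a-c\|$ (in increasing order), deduced correctly from Courant--Fischer and applied to $c=ubu^*$. The sandwich $\delta(a,b)\le\max_k|\alpha_k-\beta_k|\le d_U(a,b)\le\delta(a,b)$ does indeed deliver the ``moreover'' clause for free, so no separate combinatorial argument about the sorted matching is needed.
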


The \emph{Weyl problem} for matrices is to determine whether \eqref{eqn:weyl} holds for a given class of normal matrices. `Class'  as understood in the present article means of a given spectral type. The problem has a positive solution for unitary matrices \cite{Bhatia:1984aa} (that is, spectrum contained in the circle) but not for arbitrary normal matrices \cite{Holbrook:1992aa} (although $d_U$ and $\delta$ are Lipschitz equivalent uniformly over $n$, that is, with Lipschitz constants not depending on the matrix size). The \emph{Weyl problem for $\mathrm{C}^*$-algebras} is to determine classes of $\mathrm{C}^*$-algebras $A$ and normal elements in $A$ for which some version of \eqref{eqn:weyl} holds. Here, we will only consider $\mathrm{C}^*$-algebras that are stably finite (but note that there are results in the context of von Neumann algebras \cite{Hiai:1989aa} and purely infinite $\mathrm{C}^*$-algebras \cite{Skoufranis:2013aa}).

For the rest of this section, $A$ will be a stably finite object in $\mathcal{E}$ (although many of the results hold in greater generality) and $(X,d)$ will be a compact, path-connected metric space. If $X\subseteq\mathbb{C}$, then it potentially represents the shared spectrum of normal elements $a,b\in A$. But we will not restrict to planar spaces and will eventually consider the Weyl problem for unital $^*$-monomorphisms $\varphi,\psi\colon C(X) \to A$. In either case, connectedness affords us a reasonable expectation of a purely measure theoretic computation of the relevant unitary distance, that is, it allows us to avoid $K$-theoretic obstructions associated to projections in $C(X)$.

To start with, we must suitably reinterpret the optimal matching distance $\delta$. The right notion already reveals itself in $M_n$. The normalised matrix trace $\tau$ pulls back via the Gelfand map $C(\sigma(a))\to\mathrm{C}^*(a)$ to a trace on $C(\sigma(a))$, which corresponds to a Borel probability measure $\mu_a$. This measure is of course just the normalised counting measure associated to the multiset of eigenvalues of $a$. By Hall's marriage theorem, it follows that
\begin{equation} \label{eqn:hall}
\delta(a,b) = \inf\{r>0 \mid (\forall \text{ open } U)\: \mu_a(U) \le \mu_b(U_r),\: \mu_b(U) \le \mu_a(U_r)\},
\end{equation}
where $U_r$ is the $r$-neighbourhood of $U$, that is, $\{x\in\mathbb{C} \mid d(x,U) < r\}$. The right-hand side of \eqref{eqn:hall} is known as the \emph{$\infty$-Wasserstein distance} $W_\infty(\mu_a,\mu_b)$ between the measures $\mu_a$ and $\mu_b$ (see the discussion and references in \cite[\S2]{Jacelon:2021vc}) and is exactly how we make sense of the optimal matching distance in tracial $\mathrm{C}^*$-algebras.

If $X\subseteq\mathbb{C}$ (with the Euclidean metric) is the spectrum of a normal element $a\in A$, $\varphi_a\colon C(X)\to \mathrm{C}^*(a)$ is the Gelfand map and $\tau\in T(A)$, we write $\mu_{\tau,a}=\mu_{\varphi_a^*\tau}$, the Borel probability measure corresponding to the trace $\varphi_a^*\tau=\tau\circ\varphi_a\in T(C(X)) \cong \mathcal{M}_1^+(X)$. If $b\in A$ is another normal element with spectrum $X$, we define

\begin{equation} \label{eqn:winf}
W_\infty(a,b) = \sup_{\tau\in T(A)} W_\infty(\mu_{\tau,a},\mu_{\tau,b}).
\end{equation}

The Weyl problem in this setting is: for which $A$ and $X$ can we deduce that $d_U(a,b)=W_\infty(a,b)$?

\begin{theorem} \label{thm:jstv}
With $a,b\in A$ as above, $d_U(a,b)=W_\infty(a,b)$ holds if:
\begin{enumerate}[(i)]
\item \label{jstv1} \cite{Jacelon:2014aa} $X$ is an interval;
\item \label{jstv2} \cite{Jacelon:2021wa,Jacelon:2021vc} $X$ is locally connected (so is a `Peano continuum', that is, a continuous image of $[0,1]$), $A$ is real rank zero with $\partial_e(T(A))$ compact and of finite Lebesgue covering dimension, and $K_1(\varphi_a)=K_1(\varphi_b)=0$.
\end{enumerate}
\end{theorem}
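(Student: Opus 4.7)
My plan splits the argument into the easy inequality $W_\infty(a,b)\le d_U(a,b)$ and its substantive reverse. I would dispatch the former in full generality: norm-close normal elements produce Wasserstein-close spectral distributions under any trace, since approximating $\chi_U$ from above by Lipschitz continuous functions and using functional calculus yields $\mu_{\tau,a}(U)\le\mu_{\tau,b}(U_s)$ whenever $\|a-ubu^*\|<s$, $\tau\in T(A)$ and $U\subseteq X$ is open; supremum over $\tau$ and infimum over $u$ then give the inequality. The real work is the reverse: given $\varepsilon>0$, produce $u\in U(A)$ with $\|a-ubu^*\|<r+\varepsilon$, where $r=W_\infty(a,b)$.

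For case~\ref{jstv1}, I would exploit that $C([0,1])$ is a unital, $K_1$-trivial, one-dimensional NCCW complex, so Robert's Theorem~\ref{thm:rob} classifies unital $^*$-homomorphisms $C([0,1])\to A$ up to approximate unitary equivalence by their induced $\mathrm{Cu}$-morphisms. The Wasserstein hypothesis, combined with the dimension-function identity $\widehat{[\varphi_c(\chi_U)]}(\tau)=\mu_{\tau,c}(U)$ for $c\in\{a,b\}$ and Theorem~\ref{thm:bpt}, converts into Cuntz comparisons $[\varphi_a(\chi_U)]\le[\varphi_b(\chi_{U_r})]$ and conversely for every open $U\subseteq[0,1]$. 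Partitioning $[0,1]$ into intervals of length $<\varepsilon$ and applying these comparisons piecewise yields a cut-and-paste construction producing a $^*$-homomorphism $\varphi\colon C([0,1])\to A$ with $\mathrm{Cu}(\varphi)=\mathrm{Cu}(\varphi_b)$ yet $\|\varphi(\mathrm{id})-a\|<r+\varepsilon$ by design; Theorem~\ref{thm:rob} then furnishes $\varphi\sim_{au}\varphi_b$, and selecting a unitary that implements this equivalence on the singleton $\{\mathrm{id}\}$ delivers $u$.

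For case~\ref{jstv2}, $C(X)$ is typically not an NCCW complex and Theorem~\ref{thm:rob} no longer applies directly; I would upgrade the invariant to one incorporating $K_1$-data, along the lines of the refinement described in Section~\ref{section:chaos} and \cite{Carrion:wz}. The assumption $K_1(\varphi_a)=K_1(\varphi_b)=0$ ensures that the refined invariant for $\varphi_a$ and $\varphi_b$ reduces to essentially tracial data; local connectedness of $X$ gives a Peano parametrisation along which the one-dimensional argument can be iterated through a finite resolution of $X$; real rank zero of $A$ supplies enough projections to realise the resulting matchings exactly; and compactness together with finite covering dimension of $\partial_e(T(A))$ delivers the uniform control over $T(A)$ needed to pass from the pointwise-in-$\tau$ Wasserstein condition to a single global Cuntz comparison. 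The main obstacle is precisely this higher-dimensional step: in one dimension $W_\infty$ is attained by the monotone rearrangement, which aligns with the linearly ordered spectral cover underlying Robert's theorem, whereas in higher dimensions no canonical optimal coupling exists and one must substitute an abstract Hall--type matching inside $\mathrm{Cu}(A)$ whose potential $K$-theoretic defect is exactly what the vanishing of $K_1(\varphi_a)$ and $K_1(\varphi_b)$ is designed to absorb.
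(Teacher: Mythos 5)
Your easy inequality $W_\infty(a,b)\le d_U(a,b)$ is fine (for self-adjoint $a,b$ it is the standard Lipschitz functional calculus argument), and your instinct to run everything through Robert's Theorem~\ref{thm:rob} and the Cuntz semigroup computation of Theorem~\ref{thm:bpt} is the right one. The gap is in the central step of part~\eqref{jstv1}: the ``cut-and-paste construction producing a $^*$-homomorphism $\varphi\colon C([0,1])\to A$ with $\mathrm{Cu}(\varphi)=\mathrm{Cu}(\varphi_b)$ yet $\|\varphi(\mathrm{id})-a\|<r+\varepsilon$ by design'' is asserted, not constructed, and it is essentially equivalent to the theorem. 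Indeed, by the uniqueness half of Theorem~\ref{thm:rob}, any unital $\varphi$ with $\mathrm{Cu}(\varphi)=\mathrm{Cu}(\varphi_b)$ satisfies $\varphi\sim_{au}\varphi_b$, so $\varphi(\mathrm{id})$ lies in the closure of the unitary orbit of $b$; asserting that such a $\varphi$ exists with $\varphi(\mathrm{id})$ within $r+\varepsilon$ of $a$ is precisely the conclusion $d_U(a,b)\le W_\infty(a,b)$. Moreover, the proposed mechanism does not obviously make sense: a general stably finite object of $\mathcal{E}$ need not have real rank zero, so $a$ need not admit any nontrivial spectral projections in $A$ (think of the Jiang--Su algebra) along which to ``cut'', and the Cuntz comparisons $[\varphi_a(f_U)]\le[\varphi_b(f_{U_r})]$ (note $\chi_U\notin C([0,1])$, so one must use open-support functions) compare positive elements but provide no recipe for reassembling them into a homomorphism at controlled norm distance from $a$.

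The paper closes exactly this gap by a different route: using Theorem~\ref{thm:bpt} and Robert's theorem it embeds a simple inductive limit $B$ of prime dimension drop algebras with $T(B)\cong T(A)$ into $A$ so that $\mathrm{Cu}(B)$ maps onto the subsemigroup of $\mathrm{Cu}(A)$ containing the images of $\alpha_a$ and $\alpha_b$, then uses existence and uniqueness in Theorem~\ref{thm:rob} to replace $a,b$ by elements $a',b'\in B$ at $d_U$-distance zero; the estimate $d_U(a',b')\le W_\infty(a',b')$ is then carried out concretely in dimension drop algebras by approximate diagonalisation and a fibrewise application of Weyl's classical theorem. That is where the quantitative content lives, and your proposal has no substitute for it. For part~\eqref{jstv2} your sketch is too vague to assess: the actual argument does not iterate a one-dimensional matching along a Peano parametrisation, but rather invokes the classification of unital $^*$-monomorphisms $C(X)\to A$ by $K$-theory and traces (Theorem~\ref{thm:morclass1}) together with continuous $W_\infty$-optimal transport on $X$ (Definition~\ref{def:ct} and Theorem~\ref{thm:tex}), with real rank zero and the compact, finite-dimensional $\partial_e(T(A))$ used to reduce the refined invariant to $K_*$ and to diagonalise over the trace simplex, as in the proof of Theorem~\ref{thm:dudw}.
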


\begin{proof}[Proof of \eqref{jstv1}]
Here, we provide a proof of \cite[Theorem 4.1]{Jacelon:2014aa} that is suitable for unital, classifiable $\mathrm{C}^*$-algebras. By \cite[Theorem 4.5]{Jiang:1999hb}, there exists a simple inductive limit $B$ of prime dimension drop algebras with $T(B)\cong T(A)$. By Theorem~\ref{thm:bpt}, we have $\mathrm{Cu}(A) \cong V(A) \sqcup \mathrm{LAff}_+(T(A))$ and $\mathrm{Cu}(B) \cong \mathbb{N}_0 \sqcup \mathrm{LAff}_+(T(A))$ (with $\mathbb{N}_0$ generated by $[1_B]$). The isomorphism of trace spaces therefore induces a $\mathrm{Cu}$-isomorphism $\beta$ from $\mathrm{Cu}(B)$ to the subsemigroup $S$ of $\mathrm{Cu}(A)$ generated by $[1_A]\in\mathbb{N}$ and $\mathrm{LAff}_+(T(A))$ (the latter of which represents the classes of positive elements in $A\otimes\mathbb{K}(\mathcal{H})$ that are not equivalent to projections). By Theorem~\ref{thm:rob}, this $\mathrm{Cu}$-morphism lifts to an embedding $B\hookrightarrow A$.

The images of the $\mathrm{Cu}$-morphisms $\alpha_a,\alpha_b\colon \mathrm{Cu}(C([0,1])) \to \mathrm{Cu}(A)$ induced by $a$ and $b$ are also contained in $S$ (because every projection over $C([0,1])$ is equivalent to a multiple of the unit). By the existence part of Theorem~\ref{thm:rob}, there are positive elements $a',b'\in B$ such that the diagram
\[
\begin{tikzcd}[column sep=large]
& V(A) \sqcup \mathrm{LAff}_+(T(A)) \cong \mathrm{Cu}(A)\\
\mathrm{Cu}(C([0,1])) \arrow[ur,"\alpha_a{,}\alpha_b"] \arrow[r,dashed,"\alpha_{a'}{,}\alpha_{b'}"] & \mathbb{N}_0 \sqcup \mathrm{LAff}_+(T(A)) \cong \mathrm{Cu}(B) \arrow[u,"\beta"]
\end{tikzcd}
\]
commutes. In other words, we have lifted the $\mathrm{Cu}$-morphisms $\beta^{-1}\circ\alpha_a,\beta^{-1}\circ\alpha_b\colon\mathrm{Cu}(C([0,1])) \to \mathrm{Cu}(B)$. By the uniqueness part of Theorem~\ref{thm:rob}, we have $d_U(a,a') = d_U(b,b') = 0$. Replacing $(a,b)$ by $(a',b')$, the problem is then reduced to showing that $d_U=W_\infty$ inside a dimension drop algebra. This in turn is shown by diagonalising and applying Weyl's theorem pointwise.
\end{proof}

To illustrate the ideas behind the proof of Theorem~\ref{thm:jstv}\eqref{jstv2} (which is the generalisation of \cite[Theorem 4.10]{Jacelon:2021wa} discussed after \cite[Definition 2.6]{Jacelon:2021vc}), let us consider the more abstract setting where $X$ is divorced from the plane. Since the function $\mathrm{id}_X\colon X\to X\subseteq \mathbb{C}$ (corresponding to a normal element with spectrum $X$) is no longer available as an element of $C(X)$, we measure distance relative to
\begin{equation} \label{eqn:lipdef}
\mathrm{Lip}^1(X,d) = \{f\colon X\to \mathbb{R} \mid (\forall x,y\in X)\: |f(x)-f(y)|\le d(x,y)\}.
\end{equation}
For unital $^*$-monomorphisms $\varphi,\psi\colon C(X)\to A$, we define the unitary distance
\begin{equation} \label{eqn:dulip}
d_U(\varphi,\psi) = \inf_{u\in U(A)} \sup_{f\in\mathrm{Lip}^1(X,d)}\|\varphi(f)-u\psi(f)u^*\|
\end{equation}
and the tracial distance
\begin{equation} \label{eqn:omlip}
W_\infty(\varphi,\psi) = \sup_{\tau\in T(A)} W_\infty(\mu_{\varphi^*\tau},\mu_{\psi^*\tau}).
\end{equation}

Note that $\mathrm{Lip}^1(X,d)=\mathbb{R}+\mathrm{Lip}^1(X,d)\cap B_{\mathrm{diam}(X)}(C(X))$ and that (by the Arzel\`{a}--Ascoli theorem) $\mathrm{Lip}^1(X,d)\cap B_{\mathrm{diam}(X)}(C(X))$ is compact. So,
\[
d_U(\varphi,\psi) = \inf_{u\in U(A)} \sup_{f\in\mathrm{Lip}^1(X,d)\cap B_{\mathrm{diam}(X)}(C(X))}\|\varphi(f)-u\psi(f)u^*\|
\]
since $\varphi$ and $\psi$ are assumed to be unital. In particular, $d_U(\varphi,\psi)=0$ if and only if $\varphi$ and $\psi$ are approximately unitarily equivalent.

Now the question is: when do we have $d_U(\varphi,\psi)=W_\infty(\varphi,\psi)$?

\begin{definition} [\cite{Jacelon:2021vc}, Definition 2.6] \label{def:ct}
Say that $(X,d)$ has \emph{continuous transport constant} $\le k_X$ if, for every faithful and diffuse $\mu,\nu\in\mathcal{M}_1^+(X)$ and every $\varepsilon>0$, there is a homeomorphism $h\colon X\to X$ homotopic to $\mathrm{id}_X$ such that $W_\infty(\mu,h_*\nu)<\varepsilon$ (where $h_*\nu$ is the pushforward measure $\nu\circ h^{-1}$) and
\[
d(h,\mathrm{id}_X)=\sup_{x\in X}d(h(x),x) < k_XW_\infty(\mu,\nu) + \varepsilon.
\]
\end{definition}

In other words, the \emph{continuous transport map} $h$ moves the mass distributed by $\nu$ to the $\mu$ distribution as efficiently as allowed by the geometry of $(X,d)$. Note that we always have $k_X\ge 1$, and if $X$ is a contractible topological manifold then $k_X<\infty$ by the Oxtoby--Ulam theorem \cite{Oxtoby:1941aa}. A transport constant $k_X=1$ means that the $W_\infty$ Monge transportation problem (see \cite[Chapter 1]{Villani:2009aa}) admits an approximate continuous solution. The following examples of spaces with this property are collected from \cite[Proposition 2.2, Proposition 2.5 and Theorem 2.13]{Jacelon:2021wa} and \cite[Theorem 2.8]{Jacelon:2021vc}.

\begin{theorem}[\cite{Jacelon:2021wa,Jacelon:2021vc}] \label{thm:tex}
$k_X=1$ if $X$ is:
\begin{enumerate}[(i)]
\item an interval, a circular arc not larger than a semicircle, or a full circle in the plane, equipped with the Euclidean metric;
\item a compact convex subset of Euclidean space, again with the Euclidean metric;
\item a Riemannian manifold of dimension $\ge 3$, equipped with the intrinsic metric.
\end{enumerate}
\end{theorem}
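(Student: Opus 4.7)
The plan is to treat the three families separately, in each case converting an $L^\infty$-optimal coupling of $\mu$ and $\nu$ into a continuous transport map that is homotopic to the identity, with sup-displacement tightly controlled by $W_\infty(\mu,\nu)$.

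For case (i), consider first $X=[0,1]$. The natural candidate is the monotone rearrangement $h = F_\mu^{-1} \circ F_\nu$, where $F_\mu$ and $F_\nu$ are the cumulative distribution functions of $\mu$ and $\nu$. Because $\mu$ and $\nu$ are faithful and diffuse, $F_\mu$ and $F_\nu$ are continuous strictly increasing bijections, so $h$ is an orientation-preserving self-homeomorphism (hence homotopic to $\mathrm{id}_X$) with $h_*\nu = \mu$ exactly. The one-dimensional identity $W_\infty(\mu,\nu) = \sup_{t\in[0,1]}|F_\mu^{-1}(t)-F_\nu^{-1}(t)|$ then yields $d(h,\mathrm{id}_X)=W_\infty(\mu,\nu)$ after substituting $t = F_\nu(x)$. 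A circular arc not larger than a semicircle, equipped with the Euclidean chord metric, is globally order-isomorphic to an interval via its arc-length parametrisation, so the same construction applies. For the full circle, parametrise by arc length modulo the circumference and reduce to the interval case by choosing a cut point that approximately minimises the cyclic shift cost between the lifted measures; the remaining task is to verify that the rearrangement closes up into a homeomorphism of the circle (rather than merely of the cut interval), which can be arranged since $\mu,\nu$ are diffuse.

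For case (ii), $X \subseteq \mathbb{R}^n$ compact convex with Euclidean metric, the idea is first to produce an approximately optimal transport map and then mollify it to a homeomorphism. By results on $L^\infty$ optimal transport (Champion--De Pascale--Juutinen and successors) there is a Borel map $T\colon X\to X$ with $T_*\nu=\mu$ and $\operatorname{ess\,sup}_x\|T(x)-x\|\le W_\infty(\mu,\nu)$. My plan is to partition $X$ into small cells of diameter $<\varepsilon$ and use the convexity of $X$ to interpolate each local reassignment by a straight-line isotopy that stays inside $X$ and moves points by at most $\|T(x)-x\|+O(\varepsilon)$. Arranging each local piece to fix its boundary allows the pieces to glue into a global homeomorphism homotopic to $\mathrm{id}_X$ through straight-line convex combinations, and a final Lusin-type adjustment forces $h_*\nu$ to agree with $\mu$ up to $\varepsilon$.

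For case (iii), $X$ a compact Riemannian manifold of dimension $n\ge 3$ with its intrinsic metric, the plan is to combine a discrete matching with a measure-straightening step. First, triangulate $X$ into small cells of diameter $<\varepsilon$; a discrete $W_\infty$ optimal assignment between these cells has cost at most $W_\infty(\mu,\nu)+\varepsilon$. The realisation of this assignment as an isotopy exploits $n\ge 3$: generic geodesic segments in dimension at least three can be fattened into pairwise disjoint tubes supporting compactly supported isotopies, whereas in dimension two forced crossings would inflate displacements. The composed isotopy yields a homeomorphism $h_0\simeq\mathrm{id}_X$ of controlled displacement that sends each source cell approximately onto its matched target cell. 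Finally, apply an Oxtoby--Ulam-type homogeneity theorem cell by cell to produce a small-displacement homeomorphism $h_1$ correcting the residual measure error within each cell, and set $h=h_1\circ h_0$. The main obstacle is in case (iii): one must arrange the transport paths realising the discrete matching to be simultaneously disjoint and short, so that the compositions of local isotopies do not blow up the supremum of the displacement. Dimension $n\ge 3$ is precisely what allows generic paths to be pushed off each other at negligible cost, and the quantitative packaging of this geometric fact, showing the cumulative error stays within the $+\varepsilon$ slack permitted by Definition~\ref{def:ct} with $k_X=1$, is the delicate point; in case (ii) the ambient convexity supplies straight-line isotopies for free, and case (i) avoids the problem altogether via the monotone rearrangement.
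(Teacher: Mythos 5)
The survey does not actually prove Theorem~\ref{thm:tex}; it quotes it from \cite{Jacelon:2021wa} (Propositions 2.2 and 2.5, Theorem 2.13) and \cite{Jacelon:2021vc} (Theorem 2.8), so your sketch can only be judged against Definition~\ref{def:ct} and those references. Your case (i) for the interval is correct and complete: faithfulness and diffuseness make $F_\mu$ and $F_\nu$ increasing homeomorphisms of $[0,1]$, the monotone rearrangement pushes $\nu$ forward to $\mu$ exactly, and $\sup_t|F_\mu^{-1}(t)-F_\nu^{-1}(t)|=W_\infty(\mu,\nu)$ is the standard one-dimensional identity. For the arc you should make explicit the one fact you are silently using: the Euclidean chord distance between two points of the arc is an increasing function of their arc-length separation precisely because the arc is no larger than a semicircle, and this monotonicity is what transfers optimality of the monotone matching from the arc-length metric to the chord metric (it is the reason for that hypothesis). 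The full circle is a separate theorem in \cite{Jacelon:2021wa}, and ``choose a good cut point and check that the rearrangement closes up'' states the problem rather than solves it, though it is the right shape of argument. Case (iii) is also the right strategy, and essentially the one in \cite{Jacelon:2021vc}: realise a discrete matching by isotopies supported in pairwise disjoint tubes around near-geodesics (available generically because $1+1<3$), so that displacements do not accumulate under composition, and then correct the measure cell by cell by an Oxtoby--Ulam/von Neumann homeomorphism.

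The genuine gap is case (ii). As written the construction cannot work: if each local piece of $h$ fixes the boundary of its cell, then $h$ maps every cell of diameter $<\varepsilon$ into itself, so no mass crosses cell boundaries and $h_*\nu(C)=\nu(C)$ for every cell $C$; since in general $\mu(C)\neq\nu(C)$ at scales far larger than $\varepsilon$, the requirement $W_\infty(\mu,h_*\nu)<\varepsilon$ is unattainable. The transport genuinely has to move mass over distances up to $W_\infty(\mu,\nu)$, which can be comparable to $\mathrm{diam}(X)$. If what you intended was the disjoint-tube mechanism of your case (iii), note that your own justification of that mechanism requires ambient dimension at least $3$: for a planar convex body the straight segments joining matched cells can be forced to cross, disjoint tubes are unavailable, and uncrossing a crossing pair $(a_1\mapsto b_1,\,a_2\mapsto b_2)\rightsquigarrow(a_1\mapsto b_2,\,a_2\mapsto b_1)$ decreases the \emph{sum} of the displacements but can nearly double the \emph{maximum} (take $a_1$ and $b_2$ far from the crossing point and $b_1,a_2$ close to it), so it repairs $W_1$-optimality but not $W_\infty$-optimality. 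Convexity cannot simply be invoked for ``straight-line isotopies for free''; it is used in \cite{Jacelon:2021wa} to keep the relevant interpolations inside $X$, and an additional idea is needed to handle dimensions one and two. That is the step you would have to supply.
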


In particular, $k_X=1$ if $X=U(n)$, $X=SU(n)$ or $X=Sp(n)$ for some $n$. The $K$-theory of these Lie groups is also finitely generated and torsion free, so the following implies equality of $d_U$ and $W_\infty$ for unital $^*$-monomorphisms from these commutative compact quantum groups into $A$. One might wonder whether this result could be extended to noncommutative deformations.

\begin{theorem}[\cite{Jacelon:2021vc}] \label{thm:dudw}
Suppose that $k_X<\infty$ and that $K^*(X)$ is finitely generated and torsion free. Let $A$ be a real rank zero object in $\mathcal{E}$ with $\partial_e(T(A))$ nonempty, compact and of finite Lebesgue covering dimension. Then,
\[
W_\infty(\varphi,\psi) \le d_U(\varphi,\psi) \le k_X W_\infty(\varphi,\psi)
\]
for every pair of unital $^*$-monomorphisms $\varphi,\psi\colon C(X)\to A$ with $K_*(\varphi)=K_*(\psi)$.
\end{theorem}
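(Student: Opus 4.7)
My plan is to prove the two inequalities of Theorem~\ref{thm:dudw} separately. The lower bound $W_\infty(\varphi,\psi) \le d_U(\varphi,\psi)$ is the soft direction: if $u\in U(A)$ realises $\sup_{f\in\mathrm{Lip}^1(X,d)}\|\varphi(f)-u\psi(f)u^*\|<r$, then for any $\tau\in T(A)$ the unitary invariance of $\tau$ gives $\mu_{(u\psi(\cdot)u^*)^*\tau}=\mu_{\psi^*\tau}$, and for any open $U\subseteq X$ I approximate $\chi_U$ from below (and $\chi_{U_r}$ from above) by functions in $\mathrm{Lip}^1(X,d)$ supported in $U_r$ (respectively equal to $1$ on $U$). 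Evaluating $\varphi$ and $u\psi(\cdot)u^*$ on these and applying $\tau$ yields $\mu_{\varphi^*\tau}(U)\le\mu_{\psi^*\tau}(U_r)$ and symmetrically, whence $W_\infty(\mu_{\varphi^*\tau},\mu_{\psi^*\tau})\le r$.

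For the upper bound $d_U(\varphi,\psi)\le k_X W_\infty(\varphi,\psi)$, set $w=W_\infty(\varphi,\psi)$ and fix $\varepsilon>0$. The first task, the \emph{transport step}, is to use Definition~\ref{def:ct} to produce a single self-homeomorphism $h$ of $X$, homotopic to $\mathrm{id}_X$, with $d(h,\mathrm{id}_X)<k_X w+\varepsilon$ and $\sup_{\tau\in T(A)}W_\infty(\mu_{\varphi^*\tau},h_*\mu_{\psi^*\tau})<\varepsilon$. Tracewise, the continuous-transport hypothesis supplies such $h_\tau$; to amalgamate them I would choose an $N$-decomposable finite open cover of $\partial_e T(A)$ (available because $\partial_e T(A)$ is compact and of finite covering dimension) on which the weak-$*$-continuous maps $\tau\mapsto\mu_{\varphi^*\tau}$ and $\tau\mapsto\mu_{\psi^*\tau}$ vary by at most $\varepsilon$, pick one $h_\tau$ per cover set, and interpolate colour-by-colour via isotopies of homeomorphisms of $X$.

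For the \emph{classification step}, set $\psi'=\psi\circ\alpha$ where $\alpha(f)=f\circ h$. Since $h\simeq\mathrm{id}_X$, $K_*(\psi')=K_*(\psi)=K_*(\varphi)$; since $d(h,\mathrm{id}_X)<k_X w+\varepsilon$, we get $\|\psi'(f)-\psi(f)\|\le (k_X w+\varepsilon)\,\mathrm{Lip}(f)$ for every Lipschitz $f$. Now $\varphi$ and $\psi'$ are unital $^*$-monomorphisms $C(X)\to A$ with matching $K_*$ and with all paired tracial measures $W_\infty$-close to within $\varepsilon$. The hypotheses -- $K^*(X)$ finitely generated and torsion free (so that the total $K$-theory and Hausdorffised $K_1$ data of the refined invariant collapse onto $K_*$), $A$ real rank zero in $\mathcal{E}$, and $\partial_e T(A)$ nonempty, compact, finite dimensional -- put us in the scope of the uniqueness half of the classification of morphisms into $A$ sketched in Section~\ref{section:chaos}, which delivers a unitary $u\in A$ with $\sup_{f\in\mathrm{Lip}^1(X,d)}\|\varphi(f)-u\psi'(f)u^*\|<\varepsilon$. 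A triangle inequality then gives $d_U(\varphi,\psi)\le k_X w+2\varepsilon$, and $\varepsilon\to 0$ finishes the proof.

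The principal obstacle is the transport step: promoting the trace-by-trace homeomorphisms $h_\tau$ to a single globally defined $h$ with uniform tracial control. The finite-dimensionality of $\partial_e T(A)$ (via an $N$-decomposable cover) together with the flexibility of homeomorphisms of $X$ (available because $X$ has finite continuous transport constant, which already presupposes some geometric regularity) are the precise inputs that make this amalgamation possible; dropping either would be expected to break the argument. Everything downstream -- reducing the tracial discrepancy to zero via classification, then converting the norm estimate on $\psi-\psi'$ into the bound on $d_U(\varphi,\psi)$ -- is by comparison a packaging of already available machinery.
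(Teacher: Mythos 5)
Your upper-bound argument coincides with the paper's in the monotracial case (transport via Definition~\ref{def:ct}, set $\psi'=\psi\circ h^*$, apply the uniqueness part of Theorem~\ref{thm:morclass1}, finish with a triangle inequality; note only that Definition~\ref{def:ct} requires \emph{diffuse} measures, which must be arranged by a preliminary perturbation). But both of your other steps have genuine gaps. For the lower bound, applying $\tau$ to $\|\varphi(f)-u\psi(f)u^*\|<r$ for $f\in\mathrm{Lip}^1(X,d)$ only gives $\bigl|\int f\,d\mu_{\varphi^*\tau}-\int f\,d\mu_{\psi^*\tau}\bigr|<r$, which is $W_1$-control, not $W_\infty$-control: running your squeeze with the $1$-Lipschitz function $r\max(0,1-d(\cdot,U)/r)$ yields only $\mu_{\varphi^*\tau}(U)\le\mu_{\psi^*\tau}(U_r)+1$, which is vacuous, and $W_1$ small does not force $W_\infty$ small. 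The inequality $W_\infty\le d_U$ genuinely uses the operator norm: one converts $\|\varphi(f)-u\psi(f)u^*\|<r$ into Cuntz subequivalences $(\varphi(f)-r)_+\lesssim u\psi(f)u^*$ (R{\o}rdam's lemma) and evaluates $d_\tau$ on spectral cut-downs; the paper instead routes this through the commuting-image case \cite[Corollary 3.6]{Jacelon:2021wa} after the transport step.

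The more serious gap is the amalgamation in the non-monotracial case. You propose a \emph{single} homeomorphism $h\colon X\to X$ with $\sup_{\tau\in T(A)}W_\infty(\mu_{\varphi^*\tau},h_*\mu_{\psi^*\tau})<\varepsilon$, obtained by isotoping the tracewise maps $h_\tau$ together. No such single $h$ exists in general: distinct extreme traces typically demand transports in incompatible directions, and an isotopy between the $h_i$ still outputs one homeomorphism, which cannot simultaneously approximate all of them. This is why the paper speaks of ``diagonalising with respect to a suitable partition of unity on $\partial_e(T(A))$'': the correct mechanism keeps a different $h_i$ for each member of a decomposable cover of $\partial_e(T(A))$ and decomposes the \emph{morphism} $\psi$ (using real rank zero and the Bauer structure of $T(A)$ to realise it locally as a direct sum over the fibres), applying the classification/uniqueness theorem summand by summand. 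That is where compactness and finite covering dimension of $\partial_e(T(A))$ are actually consumed; in your version they are used only to extract a finite cover, which does not suffice.
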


To prove this restatement of \cite[Theorem 3.1]{Jacelon:2021vc}, we must make use of the classification of unital $^*$-monomorphisms from $C(X)$ into $A$. The following is one of the main theorems of \cite{Carrion:wz}. It is also contained in \cite{Gong:2021va} under the assumptions that $B$ is simple and $A$ also satisfies the UCT (which covers our use of the theorem in Section~\ref{section:chaos}). Precursor results including, for example, \cite{Matui:2011uq,Lin:2014aa}, were much more limited in scope.

\begin{theorem}[\cite{Carrion:wz}] \label{thm:morclass1}
Let $B$ be a separable, unital, nuclear $\mathrm{C}^*$-algebra satisfying the UCT and let $A$ be simple, separable, unital, nuclear and $\mathcal{Z}$-stable. Then, for every unital embedding $\varphi\colon B\to A$, compact subset $L\subseteq B$ and $\varepsilon>0$, there is $\delta>0$ such that, if $\varphi'\colon B\to A$ is another unital $^*$-monomorphism that agrees with $\varphi$ on $K$-theory and $\delta$-agrees on traces, then there is a unitary $u\in U(A)$ such that $\sup_{f\in L}\|\varphi(f)-u\varphi'(f)u^*\|<\varepsilon$.
\end{theorem}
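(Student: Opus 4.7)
The plan is to prove the statement by contradiction, converting it into a single uniqueness statement in a norm ultrapower and then invoking the classification of embeddings from \cite{Carrion:wz}. Suppose the conclusion fails for some compact $L \subseteq B$ and $\varepsilon > 0$. Then there is a sequence $\delta_n \to 0$ and unital $^*$-monomorphisms $\varphi_n \colon B \to A$, each agreeing with $\varphi$ on $K$-theory and $\delta_n$-agreeing with $\varphi$ on traces, such that for every $u \in U(A)$ we have $\sup_{f \in L}\|\varphi(f) - u\varphi_n(f)u^*\| \ge \varepsilon$. Let $A_\omega$ denote the norm ultrapower of $A$ along a free ultrafilter. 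The $\varphi_n$ combine into a unital embedding $\Phi \colon B \to A_\omega$, while the constant sequence $\varphi$ gives a second embedding $\iota \circ \varphi \colon B \to A_\omega$. By construction, $\Phi$ and $\iota\circ\varphi$ induce the same map on $K$-theory and the same affine map on the trace simplex of $A_\omega$.

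I would then apply the classification of unital embeddings of separable unital nuclear UCT $\mathrm{C}^*$-algebras into simple separable unital nuclear $\mathcal{Z}$-stable $\mathrm{C}^*$-algebras from \cite{Carrion:wz}, whose full invariant refines $\mathrm{Ell}$ by adding total $K$-theory $\underline{K}(\cdot)$ and Hausdorffised algebraic $K_1$. The task is to show that the refined pieces of the invariant are automatic in this ultrapower setup. Because the tracial pullbacks already coincide exactly in $A_\omega$, the de la Harpe--Skandalis determinant underlying Hausdorffised $K_1$ vanishes as a difference; and because $A$ is $\mathcal{Z}$-stable, agreement of $K_*(\Phi)$ and $K_*(\iota \circ \varphi)$ propagates to agreement of the Bockstein coefficient data in $\underline{K}$ via the K\"{u}nneth theorem of Section~\ref{subsection:uct}. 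The uniqueness half of the classification then yields a unitary $w \in U(A_\omega)$ intertwining $\iota \circ \varphi$ and $\Phi$; representing $w$ by a bounded sequence $(u_n)$ of unitaries in $A$ and unwinding the ultrapower yields, for sufficiently large $n$, a unitary $u_n \in U(A)$ with $\sup_{f \in L}\|\varphi(f) - u_n \varphi_n(f) u_n^*\| < \varepsilon$, contradicting the choice of $\varphi_n$.

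The main obstacle is the invocation of the uniqueness theorem of \cite{Carrion:wz}, itself the culmination of a long chain of work including Elliott-style intertwinings, stable uniqueness theorems of Dadarlat--Eilers type, and careful use of the UCT to translate $K$-theoretic hypotheses into $KK$-theoretic conclusions. Nuclearity of $B$ and $\mathcal{Z}$-stability of $A$ are used essentially both to formulate that classification and to supply room for absorbing perturbations in its underlying stable uniqueness step. Once those ingredients are granted, verifying that the refined invariants collapse to $K_*$ and trace data in the ultrapower regime, and then extracting the desired unitary in $A$ from one in $A_\omega$, is routine.
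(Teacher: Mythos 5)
The paper does not actually prove this statement: it is imported from \cite{Carrion:wz}, and the author explicitly flags it as ``somewhat vague'', warning immediately afterwards that ``agreement on $K$-theory'' must in general be read as agreement on the full $K$-theoretic part of the refined invariant $\mathrm{Inv}$, i.e.\ total $K$-theory $\underline{K}$ \emph{and} Hausdorffised algebraic $K_1$, and that the reduction to plain $K_*$ is only valid under extra hypotheses (real rank zero of $A$, $K_*(B)$ finitely generated and torsion free) that are present in the application (Theorem~\ref{thm:dudw}). Your ultrapower reduction of the $\varepsilon$--$\delta$ statement to an exact uniqueness statement is a reasonable skeleton and close in spirit to how \cite{Carrion:wz} organises its uniqueness results, but the step where you argue that the refined invariants are matched automatically is where the argument breaks.

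Concretely, the claim that ``because the tracial pullbacks coincide in $A_\omega$, the de la Harpe--Skandalis determinant vanishes as a difference'' is false, and the paper's own example in Section~\ref{section:chaos} is a direct counterexample: for $\alpha\colon C(S^1)\to\mathcal{Z}$ with $\alpha^*\tau=\lambda$ and $\beta=\alpha\circ\rho_\theta$, the maps $\alpha$ and $\beta$ agree on $K_*$ and agree \emph{exactly} on traces, yet are not approximately unitarily equivalent; the obstruction lives in $\overline{K_1}^{alg}$, in the subquotient $\mathrm{Hom}(K_1(B),\mathrm{Aff}(T(A))/\overline{\rho_A(K_0(A))})$ that is invisible to both $K_*$ and $T$. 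So the literal statement you set out to prove (with ``$K$-theory'' meaning $K_*$) is false without further hypotheses; in the paper's application it is rescued because real rank zero makes $\rho_A(K_0(A))$ dense in $\mathrm{Aff}(T(A))$, collapsing $\overline{K_1}^{alg}(A)$ onto $K_1(A)$ via the Thomsen sequence. Similarly, agreement on $K_*$ yields agreement on $\underline{K}$ only when $K_*(B)$ is torsion free (the universal coefficient sequence for $K_*(B;\mathbb{Z}/n)$, not $\mathcal{Z}$-stability of $A$, is what is relevant there). Finally, $A_\omega$ is neither separable nor nuclear, so the classification theorem cannot be applied to it verbatim; one must either use the sequence-algebra formulations of uniqueness in \cite{Carrion:wz} or run the compactness argument against the approximate-unitary-equivalence classification directly, and one must also address the fact that $T(A_\omega)$ may contain traces that are not limits of traces of $A$, so exact tracial agreement in the ultrapower is not immediate from $\delta_n$-agreement downstairs.
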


We will give a more precise treatment of this somewhat vague statement in the next section. In general, agreement on $K$-theory has to take total $K$-theory and Hausdorffised algebraic $K_1$ into account, but if $A$ has real rank zero and $K_*(B)$ is finitely generated and torsion free, then it simply means agreement in $\mathrm{Hom}(K_*(B),K_*(A))$. If $X$ is a compact, connected metric space, then $W_\infty$ provides a metrisation of the $w^*$-topology on the subspace of faithful measures inside $\mathcal{M}_1^+(X)\cong T(C(X))$ (see \cite[Proposition 2.2]{Jacelon:2021vc}), so when $B=C(X)$, $\delta$-agreement on traces can be interpreted as $W_\infty(\varphi,\varphi')<\delta$. For general $B$, one can replace $W_\infty$ by $W_p$ for $p\in[1,\infty)$ (see \cite[\S2.1]{Jacelon:2021vc}).

\begin{proof}[Proof of Theorem~\ref{thm:dudw}]
We will sketch the argument in the monotracial case $T(A)=\{\tau\}$. The general case is proved similarly after diagonalising with respect to a suitable partition of unity on $\partial_e(T(A))$.

Perturbing if necessary, we may assume that $\mu=\mu_{\varphi^*\tau}$ and $\nu=\mu_{\psi^*\tau}$ are diffuse (faithfulness being automatic since $A$ is simple). Let $h\colon X \to X$ be a homeomorphism with
\begin{itemize}
\item $K_*(h)=\mathrm{id}_{K_*(C(X))}$ (which is the only reason that we want $h$ to be homotopic to $\mathrm{id}_X$),
\item $d(h,\mathrm{id}_X) < k_X W_\infty(\mu,\nu) + \frac{\varepsilon}{2} = k_X W_\infty(\varphi,\psi) + \frac{\varepsilon}{2}$, and
\item $W_\infty(\mu,h_*\nu)<\delta$, where $\delta\in\left(0,\frac{\varepsilon}{2k_X}\right)$ is provided by Theorem~\ref{thm:morclass1} for $B=C(X)$, $L=\mathrm{Lip}^1(X,d)\cap B_{\mathrm{diam}(X)}(C(X))$ and $\frac{\varepsilon}{2k_X}$.
\end{itemize}
Define $\varphi'=\psi\circ h^*\colon C(X) \to A$. Then, $\varphi'$ is a unital $^*$-monomorphism that agrees with $\varphi$ on $K$-theory and with $W_\infty(\varphi,\varphi')<\delta$, so $d_U(\varphi,\varphi')<\frac{\varepsilon}{2k_X}$. Further, since $\varphi'$ and $\psi$ have commuting images, it can be shown that $W_\infty(\varphi',\psi)\le d_U(\varphi',\psi)$ (see \cite[Corollary 3.6]{Jacelon:2021wa}). Combining all of this together gives
\begin{align*}
d_U(\varphi,\psi) &\le d_U(\varphi',\psi) + \frac{\varepsilon}{2}\\
&=\inf_{u\in U(A)}\sup_{f\in\mathrm{Lip}^1(X,d)} \|\varphi'(f)-u\psi(f)u^*\| + \frac{\varepsilon}{2}\\
&\le \sup_{f\in\mathrm{Lip}^1(X,d)}\|\varphi'(f)-\psi(f)\| + \frac{\varepsilon}{2}\\
&= \sup_{f\in\mathrm{Lip}^1(X,d)}\|\psi(f\circ h) - \psi(f)\| + \frac{\varepsilon}{2}\\
&= \sup_{f\in\mathrm{Lip}^1(X,d)}\|f\circ h-f\| + \frac{\varepsilon}{2}\\
&\le k_X W_\infty(\varphi,\psi) + \varepsilon\\
&\le k_X W_\infty(\varphi',\psi) + \frac{3\varepsilon}{2}\\
&\le k_X d_U(\varphi',\psi) + \frac{3\varepsilon}{2}\\
&\le k_X d_U(\varphi,\psi) + 2\varepsilon.
\end{align*}
Since $\varepsilon>0$ is arbitrary, we conclude that
\[
\frac{1}{k_X}d_U(\varphi,\psi) \le W_\infty(\varphi,\psi) \le d_U(\varphi,\psi). \qedhere
\]
\end{proof}

\section{Application B: chaotic tracial dynamics} \label{section:chaos}

In this section, we address the following.

\begin{question} \label{q1}
What is the generic tracial behaviour of automorphisms of stably finite objects in $\mathcal{E}$?
\end{question}

We will successively reduce this initial query until we arrive at one that we can answer using classification and known results from topological dynamics. Throughout, we will assume that $T(A)\ne\emptyset$ is a Bauer simplex (which we recall means that $\partial_e(T(A))$ is compact). Equipped with the topology of pointwise convergence, the set $\mathrm{Aut}(A)$ of $^*$-isomorphisms $A\to A$ is a Polish space. Every $\alpha\in\mathrm{Aut}(A)$ induces an affine homeomorphism $T(\alpha)\in\mathrm{Homeo}_{\mathrm{Aff}}(T(A),T(A))$, namely, $T(\alpha)\colon \tau\mapsto \tau\circ\alpha$. Our question is: what is the typical behaviour of $T(\alpha)$? More precisely:

\begin{question} \label{q2}
What are properties of $T(\alpha)$ that hold residually, that is, for at least a dense $G_\delta$ subset of $\mathrm{Aut}(A)$?
\end{question}

Recall from Section~\ref{section:traces} that $T(A)$ is a Choquet simplex: there is a one-to-one correspondence $T(A) \to \mathcal{M}_1^+(\partial_e(T(A)))$ that sends $\tau$ to $\mu_\tau$, its unique representing measure, which means that $\tau(a)=\int_{\partial_e(T(A))}\widehat{a}\,d\mu_\tau$ for every self-adjoint $a\in A$ (where $\widehat{a}(\sigma)=\sigma(a)$). We denote the inverse by $\mu\mapsto\tau_\mu$. This allows us to identify $\mathrm{Homeo}_{\mathrm{Aff}}(T(A),T(A))$ with $\mathrm{Homeo}(\partial_e(T(A)),\partial_e(T(A)))$ (as topological spaces with the topology of pointwise convergence), since every homeomorphism $h \colon \partial_e(T(A)) \to \partial_e(T(A))$ extends uniquely to an affine homeomorphism $T(A)\to T(A)$ via the pushforward
\[
\sigma\leftrightarrow\mu_\sigma\mapsto h_*\mu_\sigma\leftrightarrow\tau_{h_*\mu_\sigma}.
\]

\begin{question} \label{q3}
What are typical properties of $h=T(\alpha)|_{\partial_e(T(A))}$?
\end{question}

Since for a given $\alpha\in\mathrm{Aut}(A)$, $T(\alpha)$ gives an affine action of the  amenable group $\mathbb{Z}$ on the compact convex set $T(A)$, there is a fixed point, that is, a trace $\tau\in T(A)$ such that $\tau\circ\alpha=\tau$. (Equivalently, there is $\mu=\mu_\tau\in\mathrm{Homeo}(\partial_e(T(A)),\partial_e(T(A)))$ such that $h_*\mu=\mu$.) In other words, writing $\mathrm{Aut}(A,\tau)$ for those automorphisms of $A$ fixing a given $\tau$ (corresponding to $\mathcal{H}(\partial_e(T(A)),\mu_\tau)$, where $\mathcal{H}(X,\mu)$ denotes the $\mu$-preserving homeomorphisms $X\to X$), we have $\mathrm{Aut}(A)=\bigcup_{\tau\in T(A)}\mathrm{Aut}(A,\tau)$.

\begin{question} \label{q4}
What are typical properties in $\mathrm{Aut}(A,\tau)$?
\end{question}

Much can be said about typical properties in $\mathcal{H}(X,\mu)$, for certain $X$ and $\mu$. To make use of this knowledge, we are finally left to ask the following.

\begin{question} \label{q5}
When can we conclude that $T^{-1}(V)$ is a dense $G_\delta$ subset of $\mathrm{Aut}(A,\tau)$ whenever $V$ is a dense $G_\delta$ subset of $\mathcal{H}(\partial_e(T(A)),\mu_\tau)$?
\end{question}

We supply the following answer to Question~\ref{q5}.

\begin{theorem}[\cite{Jacelon:2022wr}] \label{thm:dense}
Let $A\in\mathcal{E}$ with $\partial_e(T(A))\ne\emptyset$ compact, such that the pairing $\rho_A\colon K_0(A) \to \mathrm{Aff}(T(A))$ is trivial (that is, $\widehat{p}\colon T(A)\to\mathbb{R}$ is constant for every projection $p$ over $A$). Then, for every $\tau\in T(A)$ and dense $G_\delta$ subset $V$ of $\mathcal{H}(\partial_e(T(A)),\mu_\tau)$, $T^{-1}(V)$ is a dense $G_\delta$ subset of $\mathrm{Aut}(A,\tau)$.
\end{theorem}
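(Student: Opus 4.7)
The $G_\delta$ part is automatic. The map $T$ is continuous when $\mathrm{Aut}(A,\tau)$ carries the topology of pointwise norm convergence and $\mathcal{H}(\partial_eT(A),\mu_\tau)$ carries that of pointwise convergence: if $\alpha_n\to\alpha$ in $\mathrm{Aut}(A,\tau)$ and $\sigma\in\partial_eT(A)$, then $T(\alpha_n)(\sigma)\to T(\alpha)(\sigma)$ in the $w^*$-topology, and since $\partial_eT(A)$ is compact and $T(\alpha_n)$ preserves it, this is convergence inside $\partial_eT(A)$. Thus $T^{-1}(V)$ is $G_\delta$ whenever $V$ is, and the content of the theorem is density.

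The strategy is to lift tracial perturbations to automorphic perturbations by means of the morphism classification of \cite{Carrion:wz}, invoking triviality of $\rho_A$ to neutralise compatibility constraints at the refined invariant level. Fix $\alpha_0\in\mathrm{Aut}(A,\tau)$ together with a basic open neighbourhood determined by a finite set $L\subseteq A$ and $\varepsilon>0$. Apply Theorem~\ref{thm:morclass1} to the embedding $\alpha_0\colon A\hookrightarrow A$ and data $(L,\varepsilon)$ to obtain $\delta>0$. Use density of $V$ to choose $h\in V$ within $\delta$ of $T(\alpha_0)|_{\partial_eT(A)}$ (in whichever $w^*$-compatible metric is employed by Theorem~\ref{thm:morclass1}), and extend $h$ via the Choquet bijection to an affine homeomorphism $h_\sharp\colon T(A)\to T(A)$; note $h_\sharp(\tau)=\tau$ because $h$ preserves $\mu_\tau$. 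Form the candidate refined $\mathrm{Ell}$-automorphism $\Phi$ of $\mathrm{Ell}(A)$ that agrees with $\alpha_0$ on $K$-theory (including total $K$-theory and Hausdorffised algebraic $K_1$) but whose tracial component is $h_\sharp$. The pairing compatibility
\[
\rho_A\circ K_0(\alpha_0)=h_\sharp^*\circ\rho_A
\]
holds because $\rho_A(K_0(A))$ consists of constant functions, which are fixed by every affine self-homeomorphism of $T(A)$; the same observation neutralises the de la Harpe--Skandalis contribution governing Hausdorffised algebraic $K_1$. Thus $\Phi$ is a legitimate target, and the existence half of morphism classification in \cite{Carrion:wz} produces $\alpha\in\mathrm{Aut}(A)$ realising $\Phi$, automatically in $\mathrm{Aut}(A,\tau)$. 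The uniqueness half (Theorem~\ref{thm:morclass1}) then delivers $u\in U(A)$ with $\max_{a\in L}\|u\alpha(a)u^*-\alpha_0(a)\|<\varepsilon$, and $\beta:=u\alpha u^*$ satisfies $\beta\in\mathrm{Aut}(A,\tau)$ and $T(\beta)|_{\partial_eT(A)}=h\in V$, as required.

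The main obstacle is controlling compatibility at the full refined invariant level that Theorem~\ref{thm:morclass1} actually sees --- total $K$-theory, the tracial pairing, and Hausdorffised algebraic $K_1$. The hypothesis that $\rho_A$ has image in the constant functions is exactly what decouples the tracial and $K$-theoretic halves of the invariant, so that any prescribed tracial perturbation is automatically compatible with the unchanged $K$-theoretic action of $\alpha_0$. A secondary, more bookkeeping, point is matching the various tracial metrics in play --- a Wasserstein-type metric in Theorem~\ref{thm:morclass1}, the $w^*$-topology on $T(A)$, and pointwise convergence on $\mathcal{H}(\partial_eT(A),\mu_\tau)$ --- but compactness of $\partial_eT(A)$ together with $w^*$-continuity of the barycentric extension $h\mapsto h_\sharp$ renders the translation routine.
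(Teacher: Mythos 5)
Your architecture is the same as the paper's (perturb only the tracial component of the refined invariant, lift by existence, then conjugate close by uniqueness), and the reduction of the $G_\delta$ part to continuity of $T$ is fine. But there is a genuine gap at the Hausdorffised algebraic $K_1$ step. You define $\Phi$ to agree with $\alpha_0$ on $\overline{K_1}^{alg}(A)$ while changing the tracial component to $h_\sharp$, and you justify this by saying that triviality of $\rho_A$ ``neutralises the de la Harpe--Skandalis contribution.'' That is backwards. The determinant takes values in $\mathrm{Aff}(T(A))/\overline{\rho_A(K_0(A))}$, and when the pairing is trivial this quotient is as \emph{large} as it can be; the compatibility square
\[
\begin{tikzcd}
\mathrm{Aff}(T(A)) \arrow[r,"\lambda_A"] \arrow[d,"h_\sharp^*"'] & \overline{K_1}^{alg}(A)\arrow[d,"\overline{K_1}^{alg}(\Phi)"]\\
\mathrm{Aff}(T(A)) \arrow[r,"\lambda_A"] & \overline{K_1}^{alg}(A)
\end{tikzcd}
\]
forces the $\overline{K_1}^{alg}$-component of $\Phi$ to move with the tracial component on the image of $\lambda_A$, so keeping it equal to $\overline{K_1}^{alg}(\alpha_0)$ produces an incompatible tuple to which the existence theorem simply does not apply. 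The paper's own example of $\mathcal{Z}$ (trivial pairing, unique trace) and circle rotations is exactly a case where two morphisms agree on $\mathrm{Ell}$ but are separated by the determinant, so this component cannot be waved away under your hypotheses. What triviality of $\rho_A$ actually buys is only the commutativity of the $\rho_A$-square, i.e., that the perturbed tracial map remains compatible with the unchanged $K_0$-action.

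The repair, which is what the paper does, is to \emph{redefine} the $\overline{K_1}^{alg}$-component of the perturbed morphism rather than keep it: on $\lambda_A(\mathrm{Aff}(T(A)))$ it is dictated by $h_\sharp$ via $\lambda_A$, and on a complementary copy of $K_1(A)$ (the Thomsen sequence splits because $\mathrm{Aff}(T(A))/\overline{\rho_A(K_0(A))}$ is divisible) it can be borrowed from $\alpha_0$. One must then check the remaining compatibility of this adjusted $\overline{K_1}^{alg}$-map with total $K$-theory via the Bockstein diagrams; the paper notes that this requires a further adjustment (which does not disturb the tracial component) and is automatic only when $K_1(A)$ is torsion free. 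Without this step your candidate $\Phi$ is not a morphism of the invariant, and the density argument does not get off the ground.
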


Examples of objects $A$ that satisfy the hypothesis of trivial tracial pairing are limits of subhomogeneous  algebras with connected spectra (assuming simplicity and no dimension growth), in particular, limits of dimension drop algebras $Z_{p_n,q_n}$, interval algebras $C([0,1],M_{k_n})$ and circle algebras $C(S^1,M_{k_n})$. On the other hand, AF algebras (or more generally, stably finite real rank zero objects in $\mathcal{E}$) do not have this property unless they are monotracial, because as mentioned in Section~\ref{section:traces}, $\rho_A(K_0(A))$ is dense in $\mathrm{Aff}(T(A))$ for such a $\mathrm{C}^*$-algebra $A$.

\subsection{The proof of Theorem~\ref{thm:dense}: lifting via classification}

Fix $\alpha\in\mathrm{Aut}(A,\tau)$, a finite set $F\subseteq A$ and $\varepsilon>0$. Let $\mathrm{Inv}$ be an invariant based on $K$-theory and traces that classifies morphisms $A\to A$ (which was alluded to in Theorem~\ref{thm:morclass1}), meaning:
\begin{itemize}
\item (existence) every morphism $\mathrm{Inv}(A)\to\mathrm{Inv}(A)$ can be lifted to a morphism $A\to A$;
\item (uniqueness) for every morphism $\varphi\colon A\to A$, there is $\delta>0$ such that, if $\psi\colon A\to A$ is a morphism for which $\mathrm{Inv}(\psi)$ agrees with $\mathrm{Inv}(\varphi)$ on $K$-theory and $\delta$-agrees on traces (that is, for which $T(\psi)$ and $T(\varphi)$ are uniformly close, within $\delta$, with respect to some fixed $w^*$-metrisation of $T(A)$), then there is a unitary $u\in U(A)$ with $\|\varphi(a)-u\psi(a)u^*\| < \varepsilon$ for every $a\in F$.
\end{itemize}
Given such an invariant, the strategy is to:
\begin{enumerate}[1.]
\item perturb $\mathrm{Inv}(\alpha)$ by keeping its $K$-theory part the same but replacing its tracial part $T(\alpha)$ by a nearby $h\in\mathcal{V}$;
\item use existence to lift this perturbed $\mathrm{Inv}$-morphism to a morphism $\alpha_h\colon A\to A$;
\item use uniqueness to deduce that a unitary conjugate $\beta$ of $\alpha_h$ is $\varepsilon$-close to $\alpha$ on $F$.
\end{enumerate}
For this to work, we must know that the perturbation in the first step still gives a valid $\mathrm{Inv}$-morphism. So, what is $\mathrm{Inv}$?

Recall that agreement on $\mathrm{Ell}$ is a necessary condition for approximate unitary equivalence on $^*$-homomorphisms. But it is not sufficient, as the following example shows. Let $\tau$ be the unique trace on the Jiang--Su algebra $\mathcal{Z}$, $\lambda$ the trace on $C(S^1)$ corresponding to Lebesgue measure on $S^1$, $\alpha\colon C(S^1)\to\mathcal{Z}$ a unital $^*$-homomorphism with $\alpha^*\tau=\lambda$ (which exists by \cite[Theorem 2.1]{Rordam:2004kq}), $\rho_\theta\colon C(S^1) \to C(S^1)$ the automorphism induced by rotation by $\theta\in(0,2\pi)$ and $\beta=\alpha\circ\rho_\theta$. Then, $\mathrm{Ell}(\alpha)=\mathrm{Ell}(\beta)$ but $\alpha$ and $\beta$ are not approximately unitarily equivalent. What distinguishes them is the de la Harpe--Skandalis determinant \cite{Harpe:1984aa}, or equivalently, \emph{Hausdorffised unitary algebraic $K_1$}
\[
\overline{K_1}^{alg}(A):=U_\infty(A)/\overline{DU_\infty}(A),
\]
where $\overline{DU_\infty}(A)$ denotes the closure (in the inductive limit topology on $U_\infty(A) = \bigcup_{n\in\mathbb{N}}U_n(A)$) of the derived subgroup of $U_\infty(A)$ (that is, the subgroup generated by commutators). $\overline{K_1}^{alg}(A)$ is related to $\mathrm{Ell}$ via the Thomsen exact sequence
\[
\begin{tikzcd}
\text{\cite{Thomsen:1995qf}:} \quad 0 \arrow[r] & \mathrm{Aff}(T(A))/\overline{\rho_A(K_0(A))} \arrow[r,"\lambda_A"] & \overline{K_1}^{alg}(A) \arrow[r,"\pi_A"] & K_1(A) \arrow[r] & 0,
\end{tikzcd}
\]
where $\lambda_A$ is the inverse of the determinant and $\pi_A([u]_{alg})=[u]_1$.

There is one remaining component of $\mathrm{Inv}$. If there is torsion in $K_*(A)$, then one must also include \emph{total $K$-theory} $\underline{K}(A) = \bigoplus_{n=0}^\infty K_*(A;\mathbb{Z}/n)$ with associated `Bockstein maps'. But that is enough: $\mathrm{Inv}$ is the refinement of $\mathrm{Ell}$ that includes $\overline{K_1}^{alg}$ and $\underline{K}$, together with compatibility maps that must be respected by morphisms, namely

\begin{equation} \label{eqn:compatible1}
\begin{tikzcd}
K_0(A) \arrow[r,"\rho_A"] \arrow[d] & \mathrm{Aff}(T(A)) \arrow[r,"\lambda_A"] \arrow[d] & \overline{K_1}^{alg}(A) \arrow[r,"\pi_A"] \arrow[d] & K_1(A) \arrow[d]\\
K_0(A) \arrow[r,"\rho_A"] & \mathrm{Aff}(T(A)) \arrow[r,"\lambda_A"] & \overline{K_1}^{alg}(A) \arrow[r,"\pi_A"] & K_1(A)
\end{tikzcd}
\end{equation}
and another family of commutative diagrams between $\underline{K}$ and $\overline{K_1}^{alg}$ that is identified in \cite{Carrion:wz}. To finish the proof of Theorem~\ref{thm:dense}, we must show that the perturbation $\alpha_h$ meets the compatibility requirements. The first, \eqref{eqn:compatible1}, is automatic under the assumption of trivial tracial pairing. The second can be arranged by an adjustment of $\overline{K_1}^{alg}(\alpha_h)$ that does not affect $T(\alpha_h)$ (and in fact is automatic if $K_1(A)$ is torsion free). This completes the proof.

\subsection{Dynamics on manifolds}

For the rest of this section, $X$ is a compact topological manifold, $\mu\in\mathcal{M}_1^+(X)$ is an \emph{OU measure}, that is, $\mu$ is faithful, diffuse and zero on the boundary $\partial X$ (if there is one), and $h\in\mathcal{H}(X,\mu)$. The name `OU measure' is in reference to the Oxtoby--Ulam theorem, which identifies OU measures on $[0,1]^n$ as precisely the measures that are homeomorphic images of Lebesgue measure. In the boundaryless case, OU measures form a dense $G_\delta$ subset of $\mathcal{M}_1^+(X)$, so the restriction of our attention to them is not that unreasonable.

\begin{definition}
The system $(X,h)$ is \emph{chaotic} if $h$ is topologically transitive (that is, there is a dense orbit) and the set of periodic points of $h$ is dense in $X$.
\end{definition}

By \cite{Banks:1992ux}, these conditions imply \emph{sensitive dependence on initial conditions}: there exists $\delta>0$ such that, for every $x\in X$ and every $\varepsilon>0$, there exist $y\in X$ and $n\in\mathbb{N}$ such that $d(x,y)<\varepsilon$ and $d(h^nx,h^ny)\ge\delta$.

\begin{definition}
The $\mu$-preserving map $h\colon X\to X$ is:
\begin{enumerate}[1.]
\item \emph{ergodic} if
\[
\lim_{n\to\infty}\frac{1}{n}\sum_{k=0}^{n-1}\frac{\mu(h^{-k}(U)\cap V)}{\mu(U)} = \mu(V);
\]
\item \emph{weakly mixing} if
\[
\lim_{n\to\infty}\frac{1}{n}\sum_{k=0}^{n-1}\left|\frac{\mu(h^{-k}(U)\cap V)}{\mu(U)} - \mu(V)\right| = 0;
\]
\item \emph{strongly mixing} if
\[
\lim_{n\to\infty}\frac{\mu(h^{-n}(U)\cap V)}{\mu(U)} = \mu(V),
\]
the equations holding for all (non-null) measurable sets $U,V$.
\end{enumerate}
\end{definition}

As described in \cite{Halmos:1960vh}, if $h$ represents the stirring of a martini (containing, say, ninety per cent gin and ten per cent vermouth, the latter initially occupying a region $V$), then the left-hand side of these equations measures the proportion of vermouth in a given region $U$ after many stirs. Ergodicity means that, on average, the proportion of vermouth in $U$ is the desired ten percent. Strong mixing is the stronger property of eventually having a perfectly mixed drink (no average necessary). Weak mixing is the intermediate property of eventually having the right amount of vermouth in $U$, except for a few rare instances (specifically in a set of integers of density zero) of the drink being either too strong or too sweet.

Irrational rotation of the circle (with Lebesgue measure) is an example of an ergodic map that is not weakly mixing. Since it is also minimal (so in particular, there are no periodic points), it is not chaotic either. A strongly mixing and chaotic example is Arnold's cat map (a volume-preserving diffeomorphism of the torus), or more generally, a mixing Anosov system. For more examples, see \cite{Jacelon:2022wr}.

\begin{theorem} \label{thm:dynamics}
Let $X$ be a compact, connected topological manifold and $\mu$ an OU measure on $X$. Then, the typical element of $\mathcal{H}(X,\mu)$ is:
\begin{enumerate}[(i)]
\item \cite{Katok:1970wx} weakly mixing, and
\item \cite{Aarts:1999wc,Daalderop:2000wm} chaotic if $\dim X\ge 2$.
\end{enumerate}
\end{theorem}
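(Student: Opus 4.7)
The framework is Baire category inside $\mathcal{H}(X,\mu)$, viewed as a Polish space under the topology of uniform convergence of both $f$ and $f^{-1}$. I would aim to exhibit, separately, the weakly mixing and chaotic homeomorphisms as dense $G_\delta$ subsets, from which the theorem follows by the Baire category theorem.

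For part (i), weak mixing is a $G_\delta$ condition: using the characterisation that $h$ is weakly mixing iff $h\times h$ is $\mu\times\mu$-ergodic, one writes ergodicity as a countable intersection of open conditions by testing Birkhoff averages against indicator functions of elements of a countable generating family in the measure algebra. For density, I would invoke Alpern's transfer principle, which passes from the classical theorem of Halmos--Rokhlin (weak mixing is residual in the group $\mathrm{MPT}(X,\mu)$ of measure-preserving transformations under its weak topology) to density inside $\mathcal{H}(X,\mu)$ with the uniform topology; the Oxtoby--Ulam theorem lurks in the background, identifying $\mu$ locally with Lebesgue measure on a cube so that the measure-theoretic approximations can be realised by honest homeomorphisms.

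For part (ii), topological transitivity is visibly $G_\delta$, being $\bigcap_{U,V\in\mathcal{B}}\bigcup_{n}\{h:h^n(U)\cap V\neq\emptyset\}$ over a countable basis $\mathcal{B}$, with each set in the union open. Density of topologically transitive homeomorphisms is then free from (i), since weakly mixing implies ergodic, and an ergodic, fully supported Borel measure on a compact connected space supports a dense orbit. What remains is that $\{h:\overline{\mathrm{Per}(h)}=X\}$ is residual, which one arranges by proving a perturbation lemma: for each $h\in\mathcal{H}(X,\mu)$, each basis element $B\subseteq X$, and each $\varepsilon>0$, there is $h'\in\mathcal{H}(X,\mu)$ within $\varepsilon$ of $h$ admitting a periodic point in $B$ that persists under further small perturbations (for instance, as a fixed point of $(h')^n$ of nonzero local degree).

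The main obstacle is precisely this perturbation lemma, which is where the hypothesis $\dim X\geq 2$ is essential. Following Aarts--Daalderop and Daalderop--Fokkink, the idea is to trivialise $\mu$ on a Euclidean chart containing $B$ via Oxtoby--Ulam, pick a finite cycle of small disjoint balls whose $h$-images almost close up in $B$, and then genuinely close the cycle by composing $h$ with a volume-preserving homeomorphism compactly supported in the chart; in two or more dimensions there is enough room to execute this closing move without disturbing measure or orientation. On $S^1$ this fails because any topologically transitive element of $\mathcal{H}(S^1,\mu)$ is minimal and conjugate to an irrational rotation, hence has no periodic points at all, so chaos is impossible in dimension $1$ and the hypothesis cannot be relaxed.
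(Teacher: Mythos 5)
The paper does not actually prove Theorem~\ref{thm:dynamics}; it is quoted verbatim from the literature (Katok--Stepin for (i), Aarts--Daalderop and Daalderop--Fokkink for (ii)), so there is no in-paper argument to compare against. Your sketch is a correct and essentially faithful outline of how those results are established, in the modern packaging of Alpern--Prasad: weak mixing is $G_\delta$ in the weak topology on measure-preserving transformations and pulls back to a $G_\delta$ in $\mathcal{H}(X,\mu)$ under the continuous inclusion, with density supplied by the conjugacy-class transfer principle (Oxtoby--Ulam reducing $\mu$ to Lebesgue on a cube); transitivity is visibly $G_\delta$ and its density is inherited from ergodicity plus full support; and the residuality of dense periodic points is exactly the Aarts--Daalderop/Daalderop--Fokkink closing construction. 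You correctly isolate the two points a careless write-up would miss: that ``has a periodic point in $B$'' is not an open condition, so one must produce periodic points of nonzero fixed-point index for an iterate, and that the closing move needs room, whence $\dim X\ge 2$.

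One caveat worth recording: the density step in your part (i) (Alpern's transfer principle) also requires $\dim X\ge 2$, and this is not an artefact of the method. Every orientation-preserving homeomorphism of $S^1$ preserving an OU measure is conjugate to a rotation (its lift satisfies $H(b)-H(a)=b-a$), and the orientation-reversing ones are involutions, so $\mathcal{H}(S^1,\mu)$ contains no weakly mixing elements at all. Thus statement (i) is false as literally written for one-dimensional $X$; the hypothesis $\dim X\ge 2$, which the theorem attaches only to (ii), is implicitly needed for (i) as well and is present in the cited sources. This is a defect of the survey's formulation rather than of your argument, but your proof should state the hypothesis where it is used.
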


Combining Theorem~\ref{thm:dense} and Theorem~\ref{thm:dynamics} provides us with a response to Question~\ref{q4}, hence to Question~\ref{q1}. Remember that any metrisable Choquet simplex can be realised as the trace space of objects in $\mathcal{E}$ (in particular, limits of dimension drop algebras), so there are many examples covered by the following.

\begin{corollary}[\cite{Jacelon:2022wr}]
Let $A\in\mathcal{E}$ such that $\partial_e(T(A))\ne\emptyset$ is a compact, connected topological manifold and such that the pairing $\rho_A\colon K_0(A) \to \mathrm{Aff}(T(A))$ is trivial, and let $\tau\in T(A)$ be represented by an OU measure $\mu$. Then, the typical $\tau$-preserving automorphism $\alpha$ of $A$ induces $h\in\mathcal{H}(\partial_e(T(A)),\mu)$ that is weakly mixing, and is also chaotic if $\dim X\ge 2$. \qed
\end{corollary}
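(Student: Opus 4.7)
The plan is to deduce this corollary directly from Theorem~\ref{thm:dense} and Theorem~\ref{thm:dynamics} by transferring the dense $G_\delta$ subsets produced on the dynamical side back across the map $T\colon\mathrm{Aut}(A,\tau)\to\mathcal{H}(\partial_e(T(A)),\mu)$. All hypotheses of Theorem~\ref{thm:dense} are in place by assumption: $A$ is a classifiable $\mathrm{C}^*$-algebra with $\partial_e(T(A))$ nonempty and compact, and the pairing $\rho_A$ is trivial. Likewise, the hypotheses of Theorem~\ref{thm:dynamics} are in place because $X:=\partial_e(T(A))$ is a compact connected topological manifold and $\mu$ is an OU measure.

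First, I would identify two distinguished subsets of $\mathcal{H}(X,\mu)$. Let $V_{\mathrm{wm}}$ be the set of $\mu$-preserving homeomorphisms that are weakly mixing, and let $V_{\mathrm{ch}}$ be the set of those that are chaotic. By Theorem~\ref{thm:dynamics}(i), $V_{\mathrm{wm}}$ is a dense $G_\delta$ subset of $\mathcal{H}(X,\mu)$; by Theorem~\ref{thm:dynamics}(ii), so is $V_{\mathrm{ch}}$ whenever $\dim X \ge 2$. Since $\mathcal{H}(X,\mu)$ is a Polish space, the Baire category theorem ensures that the intersection $V_{\mathrm{wm}}\cap V_{\mathrm{ch}}$ is again dense $G_\delta$ in the higher-dimensional case.

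Next, I would apply Theorem~\ref{thm:dense} to pull these generic sets back through $T$. The conclusion of that theorem says precisely that the preimage under $T$ of any dense $G_\delta$ subset of $\mathcal{H}(X,\mu_\tau)$ is a dense $G_\delta$ subset of $\mathrm{Aut}(A,\tau)$. Applying this to $V_{\mathrm{wm}}$, and (in dimension at least $2$) to $V_{\mathrm{wm}}\cap V_{\mathrm{ch}}$, yields the desired residual set of $\tau$-preserving automorphisms of $A$ whose tracial dynamics on $\partial_e(T(A))$ have the stated properties, proving the corollary.

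The conceptual work has been carried out upstream: the genuine difficulty is the lifting argument embedded in Theorem~\ref{thm:dense} (which uses classification of morphisms $A\to A$ via the $\mathrm{Ell}$-refinement incorporating $\underline{K}$ and $\overline{K_1}^{alg}$, and uses the triviality of $\rho_A$ to arrange compatibility \eqref{eqn:compatible1}), together with the genericity statements on manifolds from \cite{Katok:1970wx,Aarts:1999wc,Daalderop:2000wm}. Once these two inputs are in hand, the corollary is a packaging step and involves no further essential obstacle.
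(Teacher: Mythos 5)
Your proposal is correct and follows exactly the route the paper intends: the corollary is stated with an immediate \qed precisely because it is the combination of Theorem~\ref{thm:dense} (pulling back dense $G_\delta$ sets through $T$) with the genericity statements of Theorem~\ref{thm:dynamics}, plus the standard Baire-category intersection of the two residual sets in the case $\dim X\ge 2$. Your verification of the hypotheses and the packaging of the two inputs match the paper's argument.
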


\section{Application C: random $\mathrm{C}^*$-algebras} \label{section:random}

Imagine that one constructs a $\mathrm{C}^*$-algebra in one's favourite way, perhaps as an inductive limit or graph algebra, but instead of making specific choices to obtain a certain kind of structure, one makes \emph{random} choices. Then, instead of asking whether the algebra has that structure, one asks \emph{how likely} the structure is. Through classification, we can give this thought experiment mathematical meaning. The study of random graph algebras is initiated in \cite[\S6]{Jacelon:2023aa} and continued in \cite{Jacelon:2023ab}. Here, we consider inductive limits, specifically limits of prime dimension drop algebras so that the invariant is purely tracial. We use the Lazar--Lindenstrauss theorem mentioned in Section~\ref{section:traces} to build random simplices and therefore (the isomorphism classes of) random limits $\varinjlim Z_{p_n,q_n}$.

We introduce the following terminology in order to provide some extrinsic justification for focusing on these particular limits.

\begin{definition}
Say that a separable $\mathrm{C}^*$-algebra $A$ is \emph{$K$-contractible} if it is $KK$-equivalent to $\mathbb{C}$, which in the presence of the UCT is equivalent to $K_*(A) \cong K_*(\mathbb{C})$, that is, $K_0(A) \cong \mathbb{Z}$ and $K_1(A)=0$. Say that $A\in\mathcal{E}$ is \emph{strongly $K$-contractible} if
\[
(K_0(A),K_0(A)_+,[1_A],K_1(A)) \cong (\mathbb{Z},\mathbb{N}_0,1,0).
\]
\end{definition}

In this language, the Jiang--Su algebra $\mathcal{Z}$ is the unique monotracial strongly $K$-contractible object in $\mathcal{E}$.

\begin{question} \label{q7}
What is $\mathbb{P}\left(A\cong \mathcal{Z} \mid A\in\mathcal{E} \text{ is strongly $K$-contractible}\right)$?
\end{question}

As alluded to earlier, we will make this precise by introducing probability distributions on the collection of metrisable Choquet simplices (the trace space being the only component of the Elliott invariant not determined by the notion of strong $K$-contractibility). First, we reiterate that it suffices to consider limits of prime dimension drop algebras.

\begin{theorem}
If $A\in\mathcal{E}$ is $K$-contractible, then there exists $k\in\mathbb{N}$ such that either
\begin{enumerate}[(i)]
\item \label{prob1} $A \cong M_k(\mathcal{O}_\infty)$, or
\item \label{prob2} $A \cong \varinjlim M_k(Z_{p_n,q_n})$ for some coprime natural numbers $p_n,q_n$.
\end{enumerate}
If $A\in\mathcal{E}$ is strongly $K$-contractible, then \eqref{prob2} holds with $k=1$.
\end{theorem}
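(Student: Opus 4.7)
The plan is to invoke the stably-finite/purely-infinite dichotomy from Remark~\ref{remark:finite}(4): since $A\in\mathcal{E}$ is simple, exact (being nuclear) and $\mathcal{Z}$-stable, it is either purely infinite or stably finite, and I would handle these two cases separately. In both cases the hypothesis of $K$-contractibility together with the UCT gives $K_0(A)\cong\mathbb{Z}$ and $K_1(A)=0$.

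In the purely infinite case, $A$ is a unital Kirchberg algebra in the UCT class, so by the Kirchberg--Phillips classification it is determined up to isomorphism by the triple $(K_0(A),[1_A],K_1(A))$. After fixing a generator of $K_0(A)\cong\mathbb{Z}$ so that $k:=[1_A]\ge 0$, one observes that $k\ne 0$ (otherwise the relation $[1_A]+[r]=[r]$ in $V(A)$ would force a contradiction with $1_A$ being a nonzero projection in the unital setting), hence $k\in\mathbb{N}$. Since $M_k(\mathcal{O}_\infty)$ realizes the same invariant, (i) follows.

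In the stably finite case, the strategy is to build a target algebra of the required form and then match Elliott invariants. The trace simplex $T(A)$ is a nonempty metrisable Choquet simplex (by Remark~\ref{remark:finite}(3) and Thoma's theorem). Using (a suitable extension of) the Jiang--Su realization result \cite{Jiang:1999hb}, I would produce a simple unital inductive limit $B=\varinjlim Z_{p_n,q_n}$ of prime dimension drop algebras with $T(B)\cong T(A)$; the remaining ingredients of $\mathrm{Ell}(B)$ are then forced, namely $K_0(B)\cong\mathbb{Z}$ with $[1_B]=1$ (unital connecting maps), $K_1(B)=0$, and pairing $\rho_B(1)\equiv 1$. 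Setting $k=[1_A]$, which is a positive integer by weak unperforation for $\mathcal{Z}$-stable algebras combined with $\rho_A([1_A])\equiv 1$, one checks that $\mathrm{Ell}(M_k(B))\cong\mathrm{Ell}(A)$: the identification $T(M_k(B))\cong T(B)\cong T(A)$ via $\tau\mapsto\tfrac{1}{k}\mathrm{Tr}_k\otimes\tau$ is compatible with $[1_{M_k(B)}]=k$ and with the pairing $\rho_{M_k(B)}(n)=n/k$, which coincides with $\rho_A$. The classification theorem then yields $A\cong M_k(B)$; since tensoring with $M_k$ commutes with sequential inductive limits, $M_k(B)\cong\varinjlim M_k(Z_{p_n,q_n})$, giving (ii).

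Finally, for the strongly $K$-contractible case, the positive cone $K_0^+=\mathbb{N}_0\subsetneq\mathbb{Z}$ is incompatible with purely infiniteness (for which $K_0^+=K_0$ by Remark~\ref{remark:finite}(2)), so only the stably finite case applies; together with the requirement $[1_A]=1$ this forces $k=1$. The main obstacle in the argument is the appeal to the Jiang--Su realization to produce $B$ with $T(B)\cong T(A)$, which requires inductive choices of coprime pairs $(p_n,q_n)$ and connecting maps approximating the prescribed simplex by finite-dimensional ones in the spirit of the Lazar--Lindenstrauss theorem recalled in Section~\ref{section:traces}. Everything else is a routine invariant-matching exercise fed into the classification theorem.
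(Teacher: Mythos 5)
Your proposal follows essentially the same route as the paper: the stably finite/purely infinite dichotomy, Kirchberg--Phillips (i.e.\ $\mathrm{Ell}$ reduced to $(K_0,[1],K_1)$) in the infinite case, and in the finite case the observation that weak unperforation plus the fact that every state on $K_0$ is tracial forces $K_0(A)_+\cong\mathbb{N}_0$, followed by \cite[Theorem 4.5]{Jiang:1999hb} and the classification theorem; your invariant-matching for $M_k(B)$, including the determination of the pairing by $k\rho_A(1)=\widehat{1_A}=1$ in the torsion-free group $\mathrm{Aff}(T(A))$, is correct and is exactly what the paper leaves implicit.

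The one place you go beyond the paper is your claim that $k=[1_A]\neq 0$ in the purely infinite case, and the argument you give for it does not work. In a purely infinite simple unital $\mathrm{C}^*$-algebra, $V(A)\setminus\{0\}$ is already a group isomorphic to $K_0(A)$, so the relation $[1_A]+[r]=[r]$ in $V(A)$ (for nonzero $[r]$) is exactly the statement that $[1_A]=0$ in $K_0(A)$ and carries no contradiction with $1_A$ being a nonzero projection --- $\mathcal{O}_2$, where $[1]=0$ in $K_0=0$, is the standard example. Since $KK$-equivalence with $\mathbb{C}$ does not see the class of the unit, and the range-of-invariant theorem for unital Kirchberg algebras realises any distinguished element of $K_0$, including $0$, a unital Kirchberg algebra with $(K_0,[1],K_1)\cong(\mathbb{Z},0,0)$ exists and is $K$-contractible but not isomorphic to any $M_k(\mathcal{O}_\infty)$ with $k\in\mathbb{N}$. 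So either this edge case must be admitted in part (i), or one must read the theorem as normalising $[1_A]$ to be a positive multiple of a generator; your proof should flag this rather than assert a false cancellation in $V(A)$. (The paper's own proof is silent on this point, so apart from the incorrect justification your write-up is no less complete than the original.)
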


\begin{proof}
This follows from classification, since the model algebras stated in the theorem exhaust the possibilities for the invariant. The number $k$ represents the class of the unit in $K_0(A) \cong \mathbb{Z}$. If $A$ is purely infinite, then $\mathrm{Ell}$ reduces to
\[
(K_0(A),[1_A],K_1(A)) \cong (\mathbb{Z},k,0) \cong (K_0(M_k(\mathcal{O}_\infty)),[1_{M_k(\mathcal{O}_\infty)}],K_1(M_k(\mathcal{O}_\infty))).
\]
If $A$ is stably finite, then $(K_0(A),K_0(A)_+,[1_A])$ is a weakly unperforated ordered group, and the order structure is thus fixed by $K$-contractibility. To see this, note that $\tau(1)=1>0$ for every $\tau\in T(A)$, so by weak unperforation (since every state on $K$-theory is induced by a trace), $k\cdot1=[1_A]\in K_0(A)_+$ and therefore $1\in K_0(A)_+$. It follows that $K_0(A)_+\cong\mathbb{N}_0$. So by \cite[Theorem 4.5]{Jiang:1999hb}, there exists a simple inductive limit $B$ of prime dimension drop algebras such that $\mathrm{Ell}(M_k(B))\cong\mathrm{Ell}(A)$.
\end{proof}

\subsection{Representing matrices} \label{subsection:repmat}

Let $\Delta$ be a metrisable Choquet simplex. By the Lazar--Lindenstrauss theorem \cite[Theorem 5.2 and its Corollary]{Lazar:1971kx}, we can write $\Delta = \varprojlim(\Delta_n,\psi_n)$, where $\Delta_n=\mathrm{conv}\{e_{0,n},\dots,e_{n,n}\}$ sits atop its base $\Delta_{n-1}$, and $\psi\colon \Delta_n \to \Delta_{n-1}$ is the affine `collapsing' map that fixes the base and maps $e_{n,n}$ to some point with barycentric coordinates $(a_{1,n},\dots,a_{n,n})$, that is, $\psi_n(e_{n,n})=\sum_{i=1}^na_{i,n}e_{i-1,n-1}$. The upper triangular matrix $(a_{ij})_{i,j\in\mathbb{N}}$ is called a \emph{representing matrix} for $\Delta$. We identify the collection of all representing matrices with $\prod_{n=0}^\infty\Delta_n$ as sets and indeed as measure spaces once we fix a Borel product measure $\mu=\bigotimes_{n=0}^\infty\mu_n$ on $\prod_{n=0}^\infty\Delta_n$. Note that the map that assigns to a representing matrix the Choquet simplex built from the associated sequence of finite-dimensional simplices is not one to one. Nevertheless, pushing forward does give us a probability measure on the set of (isomorphism classes of) simplices.

We will consider three candidates for $\mu_n$:
\begin{enumerate}[(I)]
\item $\mu_n$ the point mass at the barycentre $\frac{1}{n+1}\left(e_{0,n}+\dots+e_{n,n}\right)$ of $\Delta_n$;
\item  $\mu_n$ the uniform measure on the $n+1$ vertices of $\Delta_n$;
\item $\mu_{\frac{n(n+1)}{2}+i}$ normalised Lebesgue measure on $\mathrm{conv}\{e_{0,n},\dots,e_{n-i+1,n}\}\subseteq\Delta_n\subseteq\Delta_{\frac{n(n+1)}{2}+i}$ for $1\le i\le n+1$ (that is, for every $\Delta_n$ we take successive uniform samples of the faces $\Delta_n\supseteq\Delta_{n-1}\supseteq\dots\supseteq\Delta_0$).
\end{enumerate}

Actually, we allow for \emph{two} source of randomness:
\begin{itemize}
\item a random choice of collapse points (described above);
\item a random sequence of simplex dimensions (described below).
\end{itemize}

\subsection{Random walks}

Let $(Y_n)_{n=0}^\infty$ be a simple random walk on $\mathbb{N}_0$ with initial distribution $\pi=(\pi_i)_{i=0}^\infty$ and transition matrix $\Pi$, that is, for any $i,j,n\ge0$, we have $\mathbb{P}(Y_0=i)=\pi_i$ and
\[
 \mathbb{P}(Y_{n+1}=j \mid Y_n = i) = \Pi_{ij} =
 \begin{cases}
 p & \text{ if }\: j=i+1\\
 q & \text{ if }\: j=i-1\\
 0 & \text{ if }\: |i-j|>1.
 \end{cases}
\]
At $0$, there is a reflecting barrier: $\mathbb{P}(Y_{n+1}=1 \mid Y_n = 0)=1$. The walk is represented by the diagram
\[
	\begin{tikzpicture}[->, >=stealth', auto, semithick, node distance=3cm]
	
	\node[state][draw=black,thick,text=black,scale=1]    (B)[]   {$0$};
	\node[state][draw=black,thick,text=black,scale=1]    (C)[right of=B]   {$1$};
	\node[state][draw=black,thick,text=black,scale=1]    (D)[right of=C]   {$2$};
	\node   (E)[right of=D]   {$\cdots$};
	\path
	(B) edge[bend left,above]		node{$1$}	(C)
	(C) edge[bend left,below]	node{$q$}	(B)
	edge[bend left,above]		node{$p$}	(D)
	(D) edge[bend left,below]	node{$q$}	(C)
	edge[bend left,above]		node{$p$}       (E)
	(E) edge[bend left,below]	node{$q$}	(D);
	\end{tikzpicture}
\]
This is a special case of a (discrete-time) \emph{Markov chain}, meaning that only the current state of the process affects the next transition, or in other words that the process does not retain any memory of the past. It is also \emph{irreducible}, which means that, for any states $i$ and $j$, there exists $n\ge0$ such that $\mathbb{P}(Y_n=j \mid Y_0=i) >0$. A state $i$ is called
\begin{itemize}
\item \emph{recurrent} if with probability $1$ it is visited infinitely often, and
\item \emph{transient} if with probability $1$ it is visited only finitely many times.
\end{itemize}
Every state is either recurrent or transient (see \cite[Theorem 1.5.3]{Norris:1998wc}) and for irreducible chains, either every state is recurrent or every state is transient (see \cite[Theorem 1.5.4]{Norris:1998wc}). It can be shown that our random walk $(Y_n)_{n=0}^\infty$ on $\mathbb{N}_0$ is recurrent if $p\le q$ and transient if $p>q$. See the discussion and references in \cite[\S2]{Jacelon:2023aa}.

\begin{remark}
\begin{enumerate}[1.]
\item With the present setup, the walk is always infinite. We can allow for the possibility of finite walks by introducing an `absorbing state' at $0$.
\item In higher dimensions, a simple symmetric walk on the grid $\mathbb{Z}^d$ is recurrent for $d=2$ but transient for $d\ge3$ (see, for example, \cite[\S1.6]{Norris:1998wc}).
\end{enumerate}
\end{remark}

\subsection{Random Choquet simplices and random $\mathrm{C}^*$-algebras}

We construct a random projective limit of finite-dimensional simplices
\[
\Delta(\Pi,\pi,\mu) = \varprojlim(\Delta_{Y_n},\psi_n\colon\Delta_{Y_n}\to\Delta_{Y_{n-1}}),
\]
where $(Y_n)$ determines the dimension of the simplex and $\mu=\bigotimes_{n=0}^\infty\mu_n$ determines the connecting map $\psi_n$, namely: $\psi_n$ is the base inclusion if $Y_{n}=Y_{n-1}-1$ and is a random collapsing map chosen  according to $\mu$ if $Y_{n}=Y_{n-1}+1$. Then, we associate to $\Delta(\Pi,\pi,\mu)$ the (isomorphism class of the) simple inductive limit $Z(\Pi,\pi,\mu)$ of prime dimension drop algebras that has this simplex as its trace space. This is our random strongly $K$-contractible object in $\mathcal{E}$.

\begin{theorem}[\cite{Jacelon:2023aa}]
\begin{enumerate}[1.]
\item If $p\le q$, then $\mathbb{P}\left(Z(\Pi,\pi,\mu)\cong\mathcal{Z}\right)=1$.
\item If $p>q$, then $\mathbb{P}\left(\dim T(Z(\Pi,\pi,\mu))=\infty\right)=1$. Specifically, $T(Z(\Pi,\pi,\mu))$ is almost surely affinely homeomorphic to:
\begin{enumerate}[(I)]
\item the Bauer simplex with boundary homeomorphic to $\{\frac{1}{n} \mid n\in\mathbb{N}\} \cup \{0\}$, or
\item the Bauer simplex with boundary homeomorphic to the Cantor set, or
\item the Poulsen simplex
\end{enumerate}
depending on whether $\mu$ is of the form (I), (II) or (III) outlined in Section~\ref{subsection:repmat}.
\end{enumerate}
\end{theorem}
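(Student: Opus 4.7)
The plan is to split along the classical recurrence/transience dichotomy for the birth--death chain $(Y_n)$: recurrent for $p \le q$, transient (with $Y_n \to \infty$ almost surely) for $p > q$. In both regimes, $Z(\Pi,\pi,\mu)$ is by construction stably finite and strongly $K$-contractible, so its isomorphism class is determined entirely by $T(Z(\Pi,\pi,\mu)) \cong \Delta(\Pi,\pi,\mu)$, and classification reduces everything to identifying the random Choquet simplex.

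\emph{Recurrent case.} With probability one the walk returns to $0$ infinitely often, at times $n_1 < n_2 < \cdots \to \infty$. Since $\Delta_0$ is a singleton, any compatible sequence $(\xi_n) \in \varprojlim(\Delta_{Y_n},\psi_n)$ must have $\xi_{n_k}$ equal to the unique point of $\Delta_0$; for fixed $n$, choosing $n_k > n$ then yields $\xi_n = \psi_{n+1} \circ \cdots \circ \psi_{n_k}(\xi_{n_k})$, a single determined point. Hence $\Delta(\Pi,\pi,\mu)$ is almost surely the one-point simplex, so $Z(\Pi,\pi,\mu)$ is monotracial and strongly $K$-contractible, and the classification theorem identifies it with $\mathcal{Z}$.

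\emph{Transient case.} Almost surely $Y_n \to \infty$, so the inverse system eventually consists of simplices of arbitrarily large dimension joined by genuine (non-injective) affine surjections along the ascents. Building up an infinite affinely independent family by choosing successive fibre points shows $\dim \Delta(\Pi,\pi,\mu) = \infty$ a.s., giving the first part. For the finer classification I would analyse the extreme boundary case by case. In (I) the deterministic barycentric collapse fixes each base vertex, producing one isolated extreme point of the limit per generation, while the successively collapsed top vertices contribute a single further accumulation extreme point, giving a boundary homeomorphic to $\{1/n : n \in \mathbb{N}\} \cup \{0\}$. In (II) each random identification of the top vertex with a uniformly chosen base vertex encodes an infinite random labelled tree; I would show that almost surely the extreme boundary of the limit is perfect, totally disconnected, compact and metrisable, and hence is a Cantor set by Brouwer's characterisation. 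In (III) successive uniform face-sampling makes the images of the face barycentres dense in $\Delta(\Pi,\pi,\mu)$, so the extreme boundary is almost surely dense; the characterisation of the Poulsen simplex as the unique metrisable Choquet simplex with dense extreme boundary then identifies $T(Z(\Pi,\pi,\mu))$ as Poulsen.

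\emph{Main obstacle.} The most delicate step is case (II): verifying that the random vertex identifications almost surely destroy all isolated extreme points of the limit while preserving total disconnectedness. This will require a zero-one law argument combined with careful combinatorial tracking of how vertex labels merge through the inverse system. Case (III) is conceptually the cleanest, relying on the universal characterisation of the Poulsen simplex, while case (I) amounts to deterministic bookkeeping once the structural claim about surviving base vertices versus collapsed tops is made precise.
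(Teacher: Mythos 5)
Your proposal is correct and follows essentially the same route as the paper: the recurrence/transience dichotomy for the birth--death chain, with recurrence forcing the projective limit of simplices to collapse to the point $\Delta_0$ (so that uniqueness of the simple monotracial limit of prime dimension drop algebras identifies the algebra as $\mathcal{Z}$), and transience sending $\dim\Delta_{Y_n}\to\infty$ almost surely. The paper's own proof is even terser, delegating the identification of the three limit simplices to the theory of representing matrices in \cite[\S3]{Jacelon:2023aa}; your case-by-case sketches for (I)--(III) (convergent-sequence boundary, Brouwer's characterisation of the Cantor set, and the uniqueness of the metrisable Choquet simplex with dense extreme boundary) are consistent with what is carried out there.
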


\begin{proof}
In the first case, the $0$-simplex is almost surely visited infinitely often, so the associated inductive sequence is trace collapsing and therefore yields $\mathcal{Z}$. In the second case, the walk is transient and so, almost surely, $\dim \Delta_{Y_n}$ wanders off to $\infty$. The statements about the kind of simplex we get (depending on the choice of measure $\mu$) follow from the theory of representing matrices (see \cite[\S3]{Jacelon:2023aa}). 
\end{proof}

\begin{remark}
With an absorbing state at $0$, we can allow for $T(Z(\Pi,\pi,\mu))$ to be a nontrivial finite-dimensional simplex with positive probability. In this case, we would take the dimension of the simplex to be $\sup\{Y_n\mid n\ge0\}$ and could compute the probability of this dimension being at most some given $k$. See \cite{Jacelon:2023aa}.
\end{remark}

\subsection*{Acknowledgements} The author was supported by the Czech Science Foundation (GA\v{C}R) project 22-07833K and the Institute of Mathematics of the Czech Academy of Sciences (RVO: 67985840), and partially supported by the Simons Foundation Award No 663281 granted to the Institute of Mathematics of the Polish Academy of Sciences for the years 2021--2023.

\end{document}